\newcommand{\mm}{\mathfrak m}
\newcommand{\II}{\mathfrak I}
\newcommand{\Z}{\mathbb{Z}}
\newcommand{\R}{\mathbb{R}}
\newcommand{\N}{\mathbb{N}}
\newcommand{\Q}{\mathbb{Q}}
\newcommand{\Fc}{\mathcal{F}}
\newcommand{\Mcc}{\mathcal{M}}
\newcommand{\Ncc}{\mathcal{N}}
\DeclareMathOperator{\pnt}{\raise 0.5mm \hbox{\large\bf.}}
\DeclareMathOperator{\relint}{int}
\DeclareMathOperator{\Tor}{Tor}
\DeclareMathOperator{\Ker}{Ker}
\DeclareMathOperator{\ini}{in}
\def\+#1{\relax\ifmmode\if\noexpand #1\relax \mathop{\kern
    0pt^+{#1}}\nolimits\else \kern 0pt^+\!#1 \fi\else$^*$#1\fi}
\let\phi=\varphi
\newtheorem{thm}{\bf Theorem}[section]
\newtheorem{lem}[thm]{\bf Lemma}
\newtheorem{cor}[thm]{\bf Corollary}
\newtheorem{prop}[thm]{\bf Proposition}
\newtheorem{conj}[thm]{\bf Conjecture}
\theoremstyle{definition}
\newtheorem{defn}[thm]{\bf Definition}
\newtheorem{rem}[thm]{\bf Remark}
\newtheorem{ex}[thm]{\bf Example}
\theoremstyle{plain}
\newtheorem*{thm*}{Theorem}
\title{On the Koszul property of toric face rings}
\author{Dang Hop Nguyen}
\address{Am Herrenberge 11, Appartment 419, 07745 Jena, Germany}
\email{nhop@uos.de}
\thanks{The author has been supported by the graduate school ``Combinatorial Structures in Algebra and Topology'' at the University of Osnabr\"uck, Germany and (partly) by the Vigoni project for a stay in Genoa in March 2011.}
\begin{document}

\begin{abstract}
Toric face rings is a generalization of the concepts of affine monoid rings and Stanley-Reisner rings. We consider several properties which 
imply Koszulness for toric face rings over a field $k$. Generalizing works of Laudal, Sletsj\o{}e and Herzog et al., graded Betti numbers of $k$ over the toric face rings are computed, and a characterization of Koszul toric face rings is provided. We investigate a conjecture suggested by 
R\"{o}mer about the sufficient condition for the Koszul property. The conjecture is inspired by Fr\"{o}berg's theorem on the Koszulness of quadratic 
squarefree monomial ideals. Finally, it is proved that initially Koszul toric face rings are affine monoid rings.
\end{abstract} 

\maketitle

\section{Introduction}
\label{intro} 
Let $k$ be a fixed field. Let $\Sigma$ be a {\em rational pointed fan} in $\R^d$ ($d \geq 1$), i.e.~ $\Sigma$ is a
 collection of rational pointed cones in $\R^d$ satisfying two conditions. Firstly, if $C\in \Sigma$ and $D$ is a face of $C$,
 then $D\in \Sigma$. Secondly, if $C, C' \in \Sigma$ then $C\cap C'$ is either empty or a common face of $C$ and $C'$. 

A {\em monoidal complex} $\mathcal{M}$ supported on $\Sigma$ is a collection of affine monoids
$M_C$ indexed by elements $C$ of $\Sigma$, such that $M_C$ generates $C$ and the following compatibility condition is fulfilled:
if $D \subseteq C \in \Sigma$, then $M_D = M_C \cap D$. 

Given $\Sigma$ and $\Mcc$ as above, define the toric face ring $k[\mathcal{M}]$ of $\mathcal{M}$ over $k$ as follow. The free 
$k$-module $k[\mathcal{M}]$ has a basis $\{t^a:a\in \cup_{C\in \Sigma}M_C\}$. The multiplication is defined for basis elements by the rule
\[
t^a\cdot t^b=t^{a+b}
\]
if $a,b$ are contained in $M_C$ for a cone $C\in \Sigma$. Otherwise, we let $t^a\cdot t^b=0$. Expanding the multiplication rule linearly, we get 
the multiplication of $k[\Mcc]$.

Toric face rings include affine monoid rings and Stanley-Reisner rings as special cases. Indeed, if the fan $\Sigma$ is the face poset of one cone, 
then $k[\Mcc]$ is an affine monoid ring. On the other hand, if all the cones of $\Sigma$ are simplicial, and $M_C = C\cap \Z^d$ is generated by exactly $\dim C$ elements for every 
$C\in \Sigma$, then $k[\Mcc]$ is a Stanley-Reisner ring. Starting with the work of Stanley \cite{Sta1}, several authors have considered toric face 
rings  \cite{BG}, \cite{BKR}, \cite{IR}, \cite{OkaYan}. For an algebraic treatment of affine monoid rings and Stanley-Reisner rings, see Bruns-
Herzog \cite{BH} or Bruns-Gubeladze \cite{BG}; for a more combinatorial treatment of Stanley-Reisner rings see Stanley \cite{Sta2}.

Let $R$ be a homogeneous affine $k$-algebra. We say that $R$ is a {\em Koszul algebra} over $k$ if $k$ has a $R$-linear resolution. This is 
equivalent to the condition that the Betti numbers $\beta ^R_{i,j}(k)=0$ for all $i\neq j$. Assume that $R$ is a Koszul $k$-algebra. Then $R$ is
 defined by quadratic relations over some polynomial ring. Fr\"oberg \cite{Fr1} proves that if $R$ is a Stanley-Reisner ring 
defined by quadratic monomial relations, then $R$ is Koszul. 

Closely related to the Koszul property is the {\em G-quadratic} property. Let $R=S/I$ be a presentation of $R$, where $S$ is a standard graded polynomial ring over $k$, and $I$ is a homogeneous ideal of $S$. Then $R$ is said to be {\em G-quadratic} if its defining ideal $I$ has a quadratic Gr\"obner basis with respect to some term order of $S$. It is well-known that $G$-quadratic algebras over $k$ are Koszul.

Herzog, Hibi and Restuccia \cite{HHR} defined strongly Koszul algebras. A homogeneous $k$-algebra is {\em strongly Koszul} if
its irrelevant ideal admits a system of generators of degree $1$, namely $a_1,\ldots,a_n$, such that
for all increasing sequence $1 \le i_1 < \cdots < i_j\le n$, the ideal $(a_{i_1},\ldots,a_{i_{j-1}}):a_{i_j}$
is generated by a subset of $a_1,\ldots,a_n$. Motivated by the notion of strongly Koszul algebras in \cite{HHR}, Koszul filtrations were introduced
in \cite{CTV} as an useful tool to deduce Koszulness. 
\begin{defn}[Conca, Trung, Valla]
\label{K_filtration}
 A family $\Fc$ of ideals of $R$ is said to be a {\em Koszul filtration} of $R$ if: 
\begin{enumerate}
 \item every ideal of $\Fc$ is generated by linear forms;
 \item the ideal $0$ and the graded maximal ideal belong to $\Fc$;
 \item for every $I \in \Fc$ different from $0$, there exists $J \in \Fc$ strictly contained in $I$ and a
linear form $x \in I$ such that $J+(x) = I$ and $J:I \in \Fc$.
\end{enumerate}
\end{defn}
 Note that a Koszul filtration, if it exists, does not contain the unit ideal. It is known that a ring which has a Koszul filtration must be 
Koszul. This gives another proof to the above result of Fr\"oberg, because in this case, the family of those ideals, each of which is generated by some 
variables, form a Koszul filtration. Moreover, a general set of at most $2n$ points in general linear position has a Koszul filtration \cite[Thm.~2.1]{CTV}.

$R$ is {\em initially Koszul} (abbreviated i-Koszul) with respect to a sequence 
$a_1, \ldots, a_n \in R_1$, if the family of ideals $\mathcal{F} = \{(a_1,\ldots,a_i): i=0,\ldots,n \}$ is a Koszul filtration
 of $R$. Algebras which are i-Koszul  must also be Koszul, and in fact they even have the stronger property of being G-quadratic; see \cite{Bl}, \cite{CRV} for details. See also the survey article \cite{Fr2}.

Our main concern in this paper is to find simple criteria for the various Koszul-like properties of the toric face ring $k[\Mcc]$. To illustrate, let us
look at the special cases of Stanley-Reisner rings and affine monoid rings. In the case of Stanley-Reisner rings, the answers 
are simple. A Stanley-Reisner ring is Koszul if and only if its defining ideal is quadratic (see \cite{Fr1}). Moreover, in this case the ring is
 strongly Koszul \cite[Cor.~2.2]{HHR}. A Stanley-Reisner ring is i-Koszul only if the simplicial complex is a full 
simplex, see \cite[Prop.~2.3]{Bl}. 

In the case of affine monoid rings $R$, we have formulae for the Betti numbers of $k$ over $R$ by the bar resolution \cite{HRW}. For results and problems about the Koszul property of polytopal algebras, see \cite[Chap.~7]{BG} and \cite{BGT}. One can see that the formulae of Betti numbers of $k$ involve infinitely many complicated simplicial homology groups. Hence we find sufficient conditions for the Koszul property of $k[\Mcc]$ 
by looking for situations where the defining ideals of the toric face rings are closed to being quadratic monomial ideals. A reasonable condition is the quadratic condition, which requires that for some system of generators, the monomial part of the defining ideal of $k[\Mcc]$ is quadratic. We will investigate a question of R\"omer, asking if the quadratic condition is satisfied and all the $k[M_C]$ is Koszul for all $C\in \Sigma$ then $k[\Mcc]$ is Koszul, see Conjecture \ref{main_conj}. We are able to prove that the quadratic condition is stable under various natural operations on toric face rings, e.g. ~taking tensor products, fiber products, Segre products, Veronese subrings or multigraded algebra retracts. Furthermore, given the quadratic condition, $k[\Mcc]$ is strongly Koszul or $G$-quadratic if and only if the same thing is true for the subrings $k[M_C]$ for all $C$, see Proposition \ref{quadGr} and Remark \ref{rmstrgKoszul}.
 
The paper is organized as follows. In Section \ref{Toricfacering}, we recall the basic theory of toric face rings and introduce homogeneous toric face rings. In the next section, we prove that the class of homogeneous toric face rings is very natural to consider ring-theoretically, in the sense that it is closed under basic operations of algebras over $k$.  In Section \ref{Bettinumbers}, we compute certain
 graded Betti numbers of $k$ over $k[\Mcc]$. From this computation, we get a characterization of the Koszul property of $k[\Mcc]$.
In Section \ref{Koszul_conj}, we prove that if $k[\Mcc]$ is Koszul, then all $k[M_C]$ are Koszul where $C\in \Sigma$. We introduce the quadratic condition and prove its stability under many different constructions involving homogeneous toric face rings.
It is expected that $k[\Mcc]$ is Koszul if the quadratic condition is satisfied for some system of generators and the monoid ring $k[M_C]$ is Koszul for all facet $C$ of $\Sigma$, see Conjecture \ref{main_conj} of R\"omer. Section \ref{StronglyKoszul} is devoted to the characterization of strong Koszulness of toric face rings. Finally, in Section \ref{InitiallyKoszul}, we prove that homogeneous i-Koszul toric face rings are simply affine monoid rings.

The content of this paper is partially included in the author's dissertation \cite{Nguyen2}.

\section{Notations and background}
\label{Toricfacering}
Let $k$ be a field, $d \geq 1$ a natural number. Denote by $\R_+$ the non-negative real numbers.
\subsection{Embedded toric face rings}
Throughout the paper, we adopt the following notations, which we explain below: $\Sigma$ is a rational pointed fan in $\R^d$, 
$\mathcal{M}$ is a monoidal complex supported on $\Sigma$. The toric face ring of $\Mcc$ over $k$ is $R=k[\Mcc]$. The set $\{a_1,\ldots,a_n\}$ is
 a homogeneous system of generators of $\mathcal{M}$. The ideal $I=I_{\Mcc}$ defines $k[\Mcc]$ as a quotient ring of $k[X_1,\ldots,X_n]$.

A rational pointed fan $\Sigma$ is a collection of rational pointed cones such that:
\begin{enumerate}
\item
if $C\in \Sigma$ and $D$ is a face of $C$ then $D \in \Sigma$;
\item 
for every $C, C' \in \Sigma$, $C\cap C'$ is either a face of both $C$ and $C'$ or empty.
\end{enumerate}

$\Sigma$ is called {\em simplicial} if each of its cones $C$ is generated by linearly independent vectors in $\R^d$.
A maximal element of $\Sigma$ with respect to (w.r.t.) inclusion is called a {\em facet} of $\Sigma$. A one dimensional face of a cone 
of $\Sigma$ is called an {\em extremal ray}.

A monoidal complex $\mathcal{M}$ supported on $\Sigma$ is a collection of affine monoids $M_C$, where $C$ varies in $\Sigma$ such that:
\begin{enumerate}
\item 
$M_C\subseteq C\cap \Z^d$ and $\R_+M_C = C$;
\item
for every $C, D \in \Sigma$ with $D\subseteq C$, $M_{D} = M_C \cap D.$
\end{enumerate}
For instance, after taking $M_C = C\cap \Z^d$ for each $C$ we get a monoidal complex supported on the fan $\Sigma$.

The toric face ring of $\mathcal{M}$ over $k$, denoted by $k[\mathcal{M}]$ is defined as follows. As a $k$-vector space we set
$$
k[\mathcal{M}] = \bigoplus _{a\in \cup _{C\in \Sigma}M_C}kt^a.
$$
The product on basis elements is given by
$$
t^a\cdot t^b = \begin{cases}
                 t^{a+b} & \text {if for some $C\in \Sigma$ both $a$ and $b$ belongs to $M_C$};\\
                       0 & \text{otherwise.}                            
                  \end{cases}
$$
Sometimes we write $a$ instead of the basis element $t^a$ of $k[\mathcal{M}]$. In that case, instead of $t^a\cdot t^b$, we write
 $a\cdot b$, hence in $k[\mathcal{M}]$:
$$
a\cdot b = \begin{cases}
                 a+b & \text {if for some $C\in \Sigma$ both $a$ and $b$ belongs to $M_C$};\\
                       0 & \text{otherwise.}                            
                  \end{cases}
$$
It is known that $k[\mathcal{M}]$ is a reduced $k$-algebra with unit $1=t^0$. Moreover, from the primary decomposition of 
$k[\Mcc]$, we have
$$
\dim k[\mathcal{M}] = \max \{\dim C: C \in \Sigma\},
$$ 
see \cite[Lem.~2.1]{IR}.
An important aspect is that $k[\mathcal{M}]$ inherits the $\Z^d$-grading from the embedding of the monoidal complex. Every $\Z^d$-graded 
component of $k[\mathcal{M}]$ has $k$-dimension less than or equal to $1$.

The two basic examples of toric face rings are Stanley-Reisner rings and affine monoid rings. We recall these examples 
for later usage.
\begin{ex}(Stanley-Reisner rings)
\label{Sta-Rei}
Let $\Delta$ be a simplicial complex on the vertex set $[n] = \{1,\ldots,n\}$. Let $e_1,\ldots,e_n$ be the standard basis vectors
 of $\R^n$. For each face $F$ of $\Delta$, consider the cone $C_F$ generated by the vectors $e_i, i\in F$. It is clear 
that the collection $\Sigma = \{C_F, F\in \Delta \}$ is a rational pointed fan in $\R^n$. For each $F\in \Delta$, choose
$M_{C_F}=C_F \cap \Z^n$. In this manner, we get a monoidal complex $\Mcc$ supported on $\Sigma$. 

The ring $k[\Mcc]$ is the Stanley-Reisner ring $k[\Delta]$. By definition, this is $k[X_1,\ldots,X_n]$ modulo the
 square-free monomial ideal $I_{\Delta}=(\prod_{j\in G}X_j:G\subseteq [n], G \notin \Delta)$.
\end{ex}
\begin{ex}(Affine monoid rings)
\label{asr}
Let $M \subseteq \N^d$ be a finitely generated monoid ($d\ge 1$). Choosing $\Sigma$ to be the face poset of $C= \R_
+M$. For each face $F$ of $C$, let $M_F =M\cap F$. The resulting toric face ring is the affine monoid ring $k[M]$.
\end{ex}
For each cone $C \in \Sigma,$ the affine monoid ring $k[M_C]$ is naturally a subring of $R$. We have natural surjections $R \rightarrow k[M_C]$ defined
 by:
$$
t^a \longmapsto \begin{cases}
                 t^a & \text {if $a$ belongs to $M_C$};\\
                 0 & \text {otherwise.}
                \end{cases}
$$
The composition of the natural inclusion morphism $k[M_C] \rightarrow R$ with the projection $R \rightarrow k[M_C]$ is the identity on $k[M_C]$, 
in other words $k[M_C]$ is an {\em algebra retract} of $R$ for every $C\in \Sigma$.
\begin{defn}
The finite set $\{a_1,\ldots, a_n\}$ is a system of generators of $\mathcal{M}$ if $a_i \in
 \cup _{C\in \Sigma}M_C$ for every $i \in [n]$, and the subset $\{a_1,\ldots,a_n\} \cap M_C$ is a system of generator of $M_C$
 for every $C\in \Sigma$. 
\end{defn}
This system of generators gives a surjection $\phi :S=k[X_1,\ldots,X_n] \rightarrow R$.
 Let $I = \Ker \phi$. For each cone $C$ of $\Sigma$, let $S_C = k[X_i: a_i \in M_C]$, we have 
a map $\phi_C: S_C \rightarrow k[M_C]$ with kernel $I_C=\Ker \phi_C$.

Denote by $\Delta _{\mathcal M}$ the following simplicial complex on the set $[n]$: a subset $F \subseteq [n]$ is a face of $\Delta _
{\mathcal M}$ if and only if there exists some cone $C\in \Sigma$ such that $\{a_j \mid j\in F\} \subseteq M_C$.
\begin{prop}[\cite{BKR}, Prop.~2.3]
\label{def-ideal} Assume that $C_1,\ldots,C_r$ are the facets of $\Sigma$. Then
$$
I = A_\mathcal{M} + \sum \limits _{i=1}^r S\cdot I_{C_i},
$$
where $A_\mathcal{M}$ is generated by square-free monomials $\prod _{j\in G}X_j,$ for which $G \notin \Delta _{\mathcal M}$.
\end{prop}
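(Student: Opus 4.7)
The plan is to prove the two inclusions separately. The inclusion $A_{\Mcc} + \sum_i S\cdot I_{C_i} \subseteq I$ is immediate: each generator $\prod_{j\in G}X_j$ of $A_{\Mcc}$ with $G \notin \Delta_{\Mcc}$ is sent by $\phi$ to $\prod_{j\in G} t^{a_j}$, which vanishes by the multiplication rule of $k[\Mcc]$ because the $a_j$ with $j \in G$ do not lie in a common cone; and the restriction of $\phi$ to $S_{C_i}$ factors through the inclusion $k[M_{C_i}] \hookrightarrow R$ as $\phi_{C_i}$, so $I_{C_i} \subseteq I$ and hence $S \cdot I_{C_i} \subseteq I$.

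For the reverse inclusion I equip $S$ with the $\Z^d$-grading $\deg X_j = a_j$; then $\phi$ is $\Z^d$-graded and $I$ is $\Z^d$-homogeneous, so it suffices to handle a homogeneous $f \in I$ of degree $a \in \Z^d$. Write $f = f' + f''$, where $f'$ collects those monomials $X^\alpha$ of $f$ whose support $F_\alpha = \{j : \alpha_j > 0\}$ fails to be a face of $\Delta_{\Mcc}$, and $f''$ collects the rest. Each monomial in $f'$ is divisible by the square-free monomial $\prod_{j\in F_\alpha}X_j \in A_{\Mcc}$, so $f' \in A_{\Mcc}$; hence $\phi(f') = 0$, and therefore $\phi(f'') = 0$, while every $X^\alpha$ in $f''$ maps to $t^a$ by $\Z^d$-homogeneity together with $F_\alpha \in \Delta_{\Mcc}$.

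The crux is to place $f''$ inside $S \cdot I_{C_i}$ for some facet $C_i$. Let $D^*$ denote the unique minimal cone of $\Sigma$ containing the point $a$, so that $a$ lies in the relative interior of $D^*$. For each monomial $X^\alpha$ appearing in $f''$, the condition $F_\alpha \in \Delta_{\Mcc}$ provides a cone $D_\alpha \in \Sigma$ with $\{a_j : j \in F_\alpha\} \subseteq M_{D_\alpha}$; then $a = \sum_{j \in F_\alpha} \alpha_j a_j \in D_\alpha$ forces $D^* \subseteq D_\alpha$ by the fan axioms. The standard convex-geometric fact that a point of a face $F$ of a pointed cone $P$ admits a non-negative combination of points of $P$ only when all contributing points themselves lie in $F$ --- applied with $P = D_\alpha$, $F = D^*$ and $\alpha_j > 0$ --- yields $a_j \in D^*$ for every $j \in F_\alpha$. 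Together with the compatibility relation $M_{D^*} = M_{D_\alpha} \cap D^*$, this gives $a_j \in M_{D^*}$ for every such $j$, so $f''$ belongs to the subring $S_{D^*} \subseteq S$ and $\phi_{D^*}(f'') = \phi(f'') = 0$, i.e.\ $f'' \in I_{D^*}$. Choosing any facet $C_i \supseteq D^*$, the commutative square $S_{D^*} \hookrightarrow S_{C_i} \to k[M_{C_i}]$ identifies $\phi_{D^*}$ with the restriction of $\phi_{C_i}$, so $I_{D^*} \subseteq I_{C_i}$, and hence $f'' \in S \cdot I_{C_i}$. Altogether $f = f' + f'' \in A_{\Mcc} + \sum_i S \cdot I_{C_i}$.

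The main obstacle is the convex-geometric step that pins every generator used in a representation of $a$ into the minimal cone of $\Sigma$ containing $a$; once this is in hand, everything else reduces to bookkeeping with the $\Z^d$-grading and the compatibility conditions of the monoidal complex.
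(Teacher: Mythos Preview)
The paper does not supply its own proof of this proposition; it is quoted from \cite{BKR} without argument, so there is no in-paper proof to compare against.

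Your argument is correct and is, in fact, the standard one. The only point worth tightening is a boundary case you pass over in silence: the cone $D^*$ is introduced as ``the unique minimal cone of $\Sigma$ containing the point $a$'', but the existence of any cone containing $a$ is only guaranteed once you know that $f''$ has at least one monomial (so that $a = \sum_j \alpha_j a_j$ with the $a_j$ lying in a common cone). When $f'' = 0$ the conclusion $f = f' \in A_{\Mcc}$ is immediate, so simply say so before invoking $D^*$. Otherwise every step --- the $\Z^d$-grading of $S$ and $I$, the splitting $f = f' + f''$, the face lemma forcing all contributing $a_j$ into $D^*$, and the chain $I_{D^*} \subseteq I_{C_i}$ --- is sound.
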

We call a binomial in $I_{C_i}, i=1,\ldots,r$ a {\em pure binomial}. A binomial $X^a-X^b$ belongs to $I$ if and only if either 
$X^a, X^b\in A_\mathcal{M}$ or $X^a-X^b$ is a pure binomial.

In the following, we call the ideal $A_\mathcal{M}$ the {\em monomial part} of $I$. Denote by $B_{\mathcal M}$ the ideal of $S$ 
generated by the pure binomials in $I$. 

It is the appropriate place to give a simple example of a genuine toric face ring.
\begin{ex}
\label{nontrivialtfr}
Consider the points in $\R^3$ with the following coordinates 
\begin{gather*}
O =(0,0,0),\ A_1 = (2,0,0),\ A_2=(0,2,0),\ A_3 = (0,0,2),\ A_4 = (1,1,0).
\end{gather*}
Consider the fan $\Sigma$ in $\R^3$ with the maximal cones $\normalfont {OA_1A_2}$, $\normalfont {OA_1A_3}$, and
$\normalfont{OA_2A_3}$. Let $\Mcc$ be the monoidal complex supported on $\Sigma$ with the three maximal monoids
 generated by $\{A_1, A_2, A_4\}$, $\{A_1,A_3\}$ and $\{A_2,A_3\}$.

The defining ideal of the toric face ring $k[\Mcc]$ is
$$
I_{\Mcc}= (X_3X_4, X_1X_2X_3)+ (X_1X_2-X_4^2)=(X_1X_2-X_4^2, X_3X_4).
$$
Thus the toric face ring of $\Mcc$ is
$$
k[\Mcc]=k[X_1,X_2,X_3,X_4]/(X_1X_2-X_4^2, X_3X_4).
$$

\bigskip
\bigskip
\setlength{\unitlength}{4cm}
\begin{picture}(1,1)
\thicklines
\put(1.5,0){\line(0,1){1}}
\put(1.5,0){\line(1,0){1}}
\put(1.5,0){\line(1,1){0.85}}

\put(1.51,-0.09){$O$}
\put(1.55,0.85){$A_1$}
\put(1.5,0.8){\circle*{0.04}}
\put(2.35,0.05){$A_3$}
\put(2.3,0){\circle*{0.04}}
\put(2.15,0.58){$A_2$}
\put(2.1,0.6){\circle*{0.04}}
\put(1.85,0.75){$A_4$}
\put(1.8,0.7){\circle*{0.04}}

\put(1.5,0.8){\line(3,-1){.6}}
\put(2.3,0){\line(-1,3){.2}}
\multiput(1.5,0.8)(0.03,-0.03){27}{\circle*{0.01}}
\end{picture}
\bigskip
\bigskip

Clearly $k[\Mcc]$ is not a domain, so it is not isomorphic to any monoid ring. 

Now assume that $k[\Mcc]$ is isomorphic to a Stanley-Reisner ring. Observe that $(X_3)$ is a minimal prime ideal of $k[\Mcc]$, see \cite[Lem.~2.1]{IR}. Thus we have $k[\Mcc]/(X_3)$ is isomorphic to a polynomial ring. 

Let $M$ be the monoid generated by $\{A_1, A_2, A_4\}$. We will show that
$$
k[\Mcc]/(X_3) \cong k[M]
$$
is not a polynomial ring. In fact, $k[M]$ is a regular ring only if $M\cong \N^2$ (see, e.g., \cite[Exercise 6.1.11]{BH}). The
 last isomorphism is not possible. Thus $k[\Mcc]$ is not isomorphic to any Stanley-Reisner ring.
\end{ex}
\begin{defn}[Homogeneous system of generators for $\Mcc$]
The set $\{a_1,\ldots,a_n\}$ is said to be a {\em homogeneous system of generators} of $\mathcal{M}$ if for every $C\in \Sigma$, the 
$k$-algebra $k[M_C]$ is {\em minimally} generated in degree 1 by $\{a_1, \ldots, a_n\} \cap M_C$. We call $k[\mathcal{M}]$ a {\em homogeneous toric 
face ring} if $\mathcal{M}$ has a homogeneous system of generators.
\end{defn}
Given a homogeneous system of generators of $\mathcal{M}$, the $\Z$-gradings on the rings $k[M_C]$ where $C\in \Sigma$ induce a 
$\Z$-grading on $k[\mathcal{M}]$. We do not require the $\Z$-grading to be compatible with the existing $\Z^d$-grading, as we
 can use the two gradings separately.
\subsection{Koszul algebras}
\label{pre_Koszul}
A homogeneous $k$-algebra $R$ is a Koszul algebra if and only if $\beta ^R_{i,j}(k)=0$ for all $i\neq j$, where $\beta ^R_{i,j}(k)= \dim _k 
\Tor ^R _i(k,k)_j$ are the Betti numbers of $k$ as a graded $R$-module. In other words, $R$ is Koszul if the module $k=R/R_+$ has a linear resolution 
as an $R$-module.

Let $R=S/I$ be a presentation of $R$, where $S=k[x_1,\ldots,x_n]$ be a standard graded polynomial ring and $I$ a homogeneous ideal of $S$. The following result is well-known; see, e.g., \cite[Prop.~3.13]{BC}.
\begin{thm}
If $I$ has a quadratic Gr\"obner basis with respect to some term order on $S$ then $R=S/I$ is a Koszul algebra.
\end{thm}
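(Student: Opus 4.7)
The plan is to combine the standard Gröbner deformation with Fröberg's theorem on quadratic monomial ideals (cited in Section~\ref{intro}). First I would pass to the initial ideal. Set $J = \ini_\prec(I)$; since $I$ has a quadratic Gröbner basis, $J$ is a quadratic monomial ideal. Choosing a weight vector $w$ on $S$ which refines $\prec$ on the monomials appearing in a Gröbner basis of $I$, one constructs the $w$-homogenization $\tilde I \subseteq S[t]$. The ring $S[t]/\tilde I$ is flat over $k[t]$, with generic fibre $S/I$ (at $t=1$) and special fibre $S/J$ (at $t=0$). Standard upper semicontinuity of graded Betti numbers in a flat family over a PID then yields
$$\beta^{S/I}_{i,j}(k)\;\le\;\beta^{S/J}_{i,j}(k)\quad\text{for all } i,j.$$

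Second, I would verify that $S/J$ is Koszul for every quadratic monomial ideal $J$. When $J$ is squarefree this is precisely Fröberg's theorem recalled in the introduction. In general, I would polarize $J$ to obtain a squarefree quadratic monomial ideal $J^{\mathrm{pol}}$ in a polynomial extension $S^{\mathrm{pol}}$ of $S$: by Fröberg, $S^{\mathrm{pol}}/J^{\mathrm{pol}}$ is Koszul, and $S/J$ is recovered from it by modding out a regular sequence of linear forms, an operation which preserves the Koszul property. Alternatively, one can exhibit a Koszul filtration on $S/J$ directly, whose members are the ideals generated by subsets of the variables, by checking the colon conditions of Definition~\ref{K_filtration} using that $J$ is quadratic.

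Combining the two steps, $\beta^{S/I}_{i,j}(k) = 0$ for all $i \ne j$, so $R = S/I$ is Koszul. The only delicate point is setting up the flat deformation and establishing the semicontinuity inequality, but both are classical and can be found in standard references such as \cite{BC}; everything else is a routine application of Fröberg's theorem.
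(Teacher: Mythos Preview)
The paper does not actually prove this statement; it is quoted as well-known with a reference to \cite[Prop.~3.13]{BC}. Your outline is precisely the standard argument found there: deform $S/I$ to $S/\ini_\prec(I)$ through a one-parameter flat family, invoke upper semicontinuity of the graded Betti numbers $\beta^{R}_{i,j}(k)$ along the family, and finish by showing that $S$ modulo any quadratic monomial ideal is Koszul. Both of your options for this last step are valid; the direct Koszul-filtration check is the cleaner one, since for a quadratic monomial ideal $Q$ and any set $G$ of variables one verifies immediately that $(x_i:i\in G):_{S/Q}x_j$ is again generated by variables. The one point worth flagging is that the semicontinuity you need concerns $\Tor^{S/I}_i(k,k)_j$, not the more commonly cited $\Tor^S_i(S/I,k)_j$; it holds for the same reason, by viewing the normalized bar complex in each bidegree as a complex of finite-rank free $k[t]$-modules over the deformation. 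There is nothing in the paper to compare against, and your proposal is correct.
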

When $I$ has a quadratic Gr\"obner basis with respect to some term order of $S$, $R$ is said to be $G$-quadratic (w.r.t.~ that term order), where $G$ stands for Gr\"obner. Hence $G$-quadraticity implies Koszulness. 

Recall that given an inclusion of rings $R\hookrightarrow S$, $R$ is called an algebra retract of $S$ if there is a ring morphism $\phi :S\rightarrow R$ (the retraction map) such that $\phi$ restricts to the identity on $R$. If the rings are $\Z^d$-graded then we also require the ring homomorphisms to preserve the gradings. Koszul algebras are very well-behaved with respect to graded algebra retracts.
\begin{prop}[{\cite[Prop.~1.4]{HHO}}]
\label{Koszulness_under_retracts}
If $R\hookrightarrow S$ is an algebra retract of homogeneous $k$-algebras, then $S$ is Koszul if and only if $R$ is Koszul and considered as an $S$-module via the retraction map, $R$ has a linear free resolution. 
\end{prop}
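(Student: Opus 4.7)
The strategy is to apply the Cartan--Eilenberg change-of-rings spectral sequence associated to the retraction $\phi\colon S \twoheadrightarrow R$. Viewing $R$ as a graded $S$-module via $\phi$ and using $\Ker\phi \subseteq S_+$ to see $R \otimes_S k \cong k$, the spectral sequence reads
\[
E^2_{p,q,r} = \Tor^R_p\bigl(\Tor^S_q(R,k),\,k\bigr)_r \;\Longrightarrow\; \Tor^S_{p+q,\,r}(k,k)
\]
and respects the internal grading $r$. Throughout I write $B_q := \Tor^S_q(R,k)$, and I will exploit the splitting $\phi \iota = 1_R$, where $\iota\colon R \hookrightarrow S$ is the inclusion.

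For the ``if'' direction, assume $R$ is Koszul and admits a linear $S$-resolution. Linearity says $B_q$ is concentrated in internal degree $q$, which for purely degree reasons forces $R_+ B_q = 0$, so $B_q \cong k(-q)^{a_q}$ as a graded $R$-module. Koszulness of $R$ then yields $E^2_{p,q,r} = \Tor^R_p(k,k)^{a_q}_{r-q} = 0$ whenever $r \neq p+q$, so the whole spectral sequence concentrates on the Koszul diagonal and $S$ is Koszul.

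For the ``only if'' direction, assume $S$ is Koszul. That $R$ is Koszul follows from the retract structure: the inclusion $R_+ \hookrightarrow S_+$ induces an inclusion of normalized bar complexes, producing a split injection $\iota_*\colon \Tor^R_{i,j}(k,k) \hookrightarrow \Tor^S_{i,j}(k,k)$ with $\phi_*$ as left inverse, and Koszulness of $S$ transfers. To produce the linear $S$-resolution of $R$, I would induct on $q$ to show $B_q$ is concentrated in internal degree $q$. The base $q = 0$ is immediate from $B_0 = k$. For $q = 1$, the $R$-bimodule splitting $S = R \oplus \Ker\phi$ together with the fact that $S$ is standard graded yields a direct verification that $\Ker\phi$ is generated by $(\Ker\phi)_1$, hence $B_1 = \Ker\phi / S_+\Ker\phi$ is concentrated in degree one.

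The inductive step for $q \geq 2$ is the main obstacle. Under the induction hypothesis, the rows $q' < q$ of the spectral sequence concentrate on their respective Koszul diagonals, and Koszulness of $S$ at the abutment forces $E^\infty_{0,q,r} = 0$ for $r > q$. Tracking internal degrees of incoming differentials (which by Koszulness of $R$ must vanish in the wrong degree) rules out $E^2_{0,q,r}$ for $r \geq q+2$. However, at the borderline internal degree $r = q+1$ the incoming differentials from positions $(s,\,q-s+1)$ carry Koszul-diagonal degree $s + (q-s+1) = q+1$ and are not killed by Koszulness of $R$ alone; moreover, even were $B_q$ known to be generated in degree $q$, one would still need to show $R_+$ annihilates $B_q^q$, which the spectral sequence does not provide on degree grounds. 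The cleanest way past this is to refine the change-of-rings spectral sequence by filtering the bar complex of $S$ according to the number of $\Ker\phi$-letters (available from the splitting $S = R \oplus \Ker\phi$) and combine the resulting subsidiary spectral sequence with Koszulness of both $R$ and $S$ to force the remaining higher differentials to vanish, pinning down $B_q \cong k(-q)^{a_q}$.
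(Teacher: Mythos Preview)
The paper does not supply its own proof of this proposition; it is quoted from \cite[Prop.~1.4]{HHO} and used as a black box. So there is no in-paper argument to compare against, and your attempt has to be judged on its own terms.

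Your ``if'' direction is clean and correct: once $B_q = \Tor^S_q(R,k)$ sits in a single internal degree $q$, the action of $R_+$ must vanish for degree reasons, so $B_q \cong k(-q)^{a_q}$ as a graded $R$-module, and Koszulness of $R$ collapses the spectral sequence onto the diagonal $r = p+q$.

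In the ``only if'' direction, your argument that $R$ inherits Koszulness via the split injection of bar complexes is standard and fine, and the $q=1$ step is also correct: the decomposition $S_n = R_n \oplus (\Ker\phi)_n$ combined with $S_n = S_1^n = (R_1 + (\Ker\phi)_1)^n$ does force $\Ker\phi$ to be generated in degree~$1$. The problem is the inductive step for $q \ge 2$, which you yourself flag as ``the main obstacle'' and then do not resolve. You correctly isolate the difficulty: at internal degree $r = q+1$ the incoming differential $d_2 \colon E^2_{2,q-1,q+1} \to E^2_{0,q,q+1}$ has source on the Koszul diagonal of $R$, so Koszulness of $R$ does not kill it, and nothing you have written forces its target to vanish. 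Your proposed remedy --- refiltering the bar complex of $S$ by the number of $\Ker\phi$-letters and running a subsidiary spectral sequence --- is a plausible strategy, but you have not carried it out, and it is not evident without further work that it closes the gap. As it stands, the ``only if'' direction is a sketch with an acknowledged hole rather than a proof.
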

For further information and results on Koszul algebras, the reader may consult \cite{BFr}, \cite{Fr2}.
\subsection{Multigraded algebra retracts}
\begin{defn}
Let $\Gamma$ be a subfan of $\Sigma$ and $\Mcc_{\Gamma}$ the induced monoidal subcomplex supported on $\Gamma$. We say that $\Mcc_{\Gamma}$ is a 
{\em restricted subcomplex} of $\Mcc$ if for every finite set of elements $z_1,\ldots,z_n$ in $|\Mcc_{\Gamma}|$ such that there is
 a cone $C\in \Sigma$ with the property $z_1,\ldots,z_n \in M_C$, we can also find a cone $D\in \Gamma$ such that 
$z_1,\ldots,z_n \in M_D$.
\end{defn}
For example, given any $C\in \Sigma$, the monoid $M_C$ is a restricted subcomplex of $\Mcc$.

We can classify all $\Z^d$-graded algebra retracts of $k[\Mcc]$ as follow; for completeness, we give a full proof below.
\begin{prop}[{\cite[Prop.~ 4.4]{EH}}]
\label{monoidal_complexes_retracts}
The ring $A$ is a $\Z^d$-graded algebra retract of $k[\Mcc]$ if and only if there is a restricted subcomplex $\Mcc_{\Gamma}$ of $\Mcc$ and a $\Z^d$-graded isomorphism 
$A\cong k[\Mcc_{\Gamma}].$
\end{prop}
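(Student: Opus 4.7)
The plan is to prove both directions constructively, exploiting the fact that every $\Z^d$-graded component of $k[\Mcc]$ has $k$-dimension at most one.

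For the \emph{if} direction, suppose $\Mcc_{\Gamma}$ is a restricted subcomplex. Define $\iota\colon k[\Mcc_{\Gamma}]\to k[\Mcc]$ by $t^a\mapsto t^a$ for $a\in|\Mcc_{\Gamma}|$, and the candidate retraction $\pi\colon k[\Mcc]\to k[\Mcc_{\Gamma}]$ by $\pi(t^a)=t^a$ when $a\in|\Mcc_{\Gamma}|$ and $\pi(t^a)=0$ otherwise. The restricted condition is exactly what makes $\iota$ multiplicative: whenever $a,b\in|\Mcc_{\Gamma}|$ lie together in some $M_C$ with $C\in\Sigma$, they also lie together in some $M_D$ with $D\in\Gamma$, so $t^a\cdot t^b$ is computed the same way in both rings. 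For $\pi$ the only delicate case is $a\notin|\Mcc_{\Gamma}|$, $b$ arbitrary, $a,b\in M_C$, where I must show $a+b\notin|\Mcc_{\Gamma}|$. If instead $a+b\in M_D$ for some $D\in\Gamma$, then $a+b\in C\cap D$, a face of $C$; since $a,b\in C$, the standard face property of cones forces $a,b\in C\cap D$, so $a\in M_C\cap D=M_{C\cap D}\subseteq|\Mcc_{\Gamma}|$ (using $C\cap D\in\Gamma$ by subfan closure), a contradiction.

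For the \emph{only if} direction, let $\iota\colon A\hookrightarrow k[\Mcc]$ and $\pi\colon k[\Mcc]\to A$ be $\Z^d$-graded morphisms with $\pi\iota=\mathrm{id}_A$, and set
\[
\Omega=\{a\in|\Mcc| : \pi(t^a)\neq 0\}.
\]
Because each $\Z^d$-graded component of $k[\Mcc]$ is at most one-dimensional, the identity $\pi\iota=\mathrm{id}_A$ forces $\iota(\pi(t^a))=t^a$ for every $a\in\Omega$, so that $\iota(A)$ has $k$-basis $\{t^a:a\in\Omega\}$. The key multiplicative input is that, for $a,b\in M_C$, applying $\pi$ to the relation $t^a\cdot t^b=t^{a+b}$ together with the injectivity of $\iota$ yields
\[
a+b\in\Omega \iff a\in\Omega\text{ and }b\in\Omega.
\]
Thus $\Omega\cap M_C$ is a face of the affine monoid $M_C$ in the monoid-theoretic sense, and by the standard bijection between the faces of an affine monoid and the faces of its associated cone, there is a unique cone-face $F_C$ of $C$ with $\Omega\cap M_C=M_{F_C}$.

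Set $\Gamma=\{F\in\Sigma:M_F\subseteq\Omega\}$. Then $\Gamma$ contains every $F_C$, is closed under taking faces (since $M_{F'}=M_F\cap F'\subseteq M_F$ whenever $F'\leq F$), and hence is a subfan of $\Sigma$. One checks that $\Omega=\bigcup_{F\in\Gamma}M_F=|\Mcc_{\Gamma}|$, and that $\Mcc_{\Gamma}$ is restricted: any $z_1,\dots,z_n\in|\Mcc_{\Gamma}|$ lying in a common $M_C$ automatically lie in $\Omega\cap M_C=M_{F_C}$ with $F_C\in\Gamma$. By the \emph{if} direction already established, $k[\Mcc_{\Gamma}]$ embeds naturally in $k[\Mcc]$ as a $\Z^d$-graded subring whose image is precisely $\iota(A)$, yielding the desired $\Z^d$-graded isomorphism $A\cong k[\Mcc_{\Gamma}]$. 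The main obstacle is the passage from the monoid-theoretic face $\Omega\cap M_C$ to the cone-face $F_C$; everything else is $\Z^d$-graded bookkeeping and use of the compatibility $M_{C\cap D}=M_C\cap D$ in the monoidal complex.
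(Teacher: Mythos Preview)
Your proof is correct. The \emph{if} direction follows the same outline as the paper's: both build the natural inclusion and projection between $k[\Mcc_{\Gamma}]$ and $k[\Mcc]$, and you are in fact more careful than the paper in verifying multiplicativity (in particular, your treatment of the case $a\notin|\Mcc_{\Gamma}|$ for $\pi$ is a detail the paper omits).

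The \emph{only if} direction is where the two arguments genuinely diverge. The paper observes that $A\cong k[\Mcc]/L$ with $L=\ker\pi$ a $\Z^d$-graded ideal, notes that $A$ is reduced (being a subring of the reduced ring $k[\Mcc]$) so $L$ is radical, and then invokes the classification of $\Z^d$-graded radical ideals of toric face rings from \cite[Lem.~2.1]{IR} to produce the subfan $\Gamma$ directly. You instead analyze the idempotent $\iota\pi$ degree by degree, extract the set $\Omega$, and use the standard bijection between faces of an affine monoid $M_C$ and faces of its cone $C$ to show that $\Omega\cap M_C=M_{F_C}$ for a genuine cone-face $F_C$; the subfan $\Gamma$ is then assembled from these $F_C$. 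Your route is more self-contained---it avoids the external citation and makes the role of the retract condition (via the idempotence of $\iota\pi$, which forces the scalars $c_a\in\{0,1\}$) completely transparent---while the paper's route is shorter once the classification lemma is granted. One small point: your line ``applying $\pi$ \ldots\ together with the injectivity of $\iota$'' for the equivalence $a+b\in\Omega\iff a,b\in\Omega$ is really the computation $\iota\pi(t^a)\cdot\iota\pi(t^b)=\iota\pi(t^{a+b})$, which gives $c_ac_b=c_{a+b}$ in $\{0,1\}$; it would be worth stating this explicitly, since the implication $\pi(t^{a+b})\neq 0\Rightarrow \pi(t^a),\pi(t^b)\neq 0$ is not immediate from $\pi$ alone when $A$ may have zero-divisors.
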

\begin{proof}
The ``if" direction: we have to show that $k[\Mcc_{\Gamma}]$ is a $\Z^d$-graded algebra retract of $k[\Mcc]$ for each restricted subcomplex $\Mcc_{\Gamma}$. For this, we observe that there is a natural inclusion  $k[\Mcc_{\Gamma}] \hookrightarrow k[\Mcc]$ mapping $t^a$ to itself for each $a\in |\Mcc_{\Gamma}|$. To check that this is 
indeed an inclusion, we use Proposition \ref{def-ideal}. The binomial relations of $k[\Mcc_{\Gamma}]$ clearly map to zero. The monomial relations also map to zero, since any sequence of elements of $\Mcc_{\Gamma}$ which is not contained in any face of $\Sigma$ also is not contained in any face of $\Gamma$. The composition of 
$k[\Mcc_{\Gamma}] \hookrightarrow k[\Mcc]$ with the natural projection $k[\Mcc] \to k[\Mcc_{\Sigma}]$ is the identity of $k[\Mcc_{\Gamma}]$.

The ``only if" direction: if there is an $\Z^d$-graded algebra retract $A\to k[\Mcc]$ with multigraded inverse $\phi: k[\Mcc] \to A$, then $A\cong k[\Mcc]/L$ where 
$L=\ker \phi$ is a $\Z^d$-graded ideal of $k[\Mcc]$. Now $A$ is reduced since $k[\Mcc]$ is so, therefore $L$ is a radical ideal. From \cite[Lem.~2.1]{IR}, $L$ is generated by elements of $|\Mcc|$ which are not in a subfan $\Gamma$, and this implies $A\cong k[\Mcc_{\Gamma}]$. Similar argument as above shows that $\Mcc_{\Gamma}$ is a restricted 
subcomplex of $\Mcc$. This is our desired claim.
\end{proof}
\section{Basic operations on toric face rings}
\label{operations_on_toric_face_rings}
In this section, we show that the class of homogeneous toric face rings over $k$ is closed under taking Veronese subrings, tensor products,
fiber products and Segre products over $k$. These results show that toric face rings are natural objects to consider, even if we are mainly 
interested in Stanley-Reisner rings. For example, if we take Veronese subrings of a Stanley-Reisner ring, then we do not get 
Stanley-Reisner rings but toric face rings. 

The following result is implicit in \cite{BR}; we give the details for completeness.
\begin{prop}
\label{Veronese_of_toric_face_rings}
For each $m\ge 1$, the Veronese subring $k[\Mcc]^{(m)}$ of a homogeneous toric face ring $k[\Mcc]$ is also a toric face ring.
\end{prop}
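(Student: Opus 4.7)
My plan is to keep the fan $\Sigma$ unchanged and, for each $C\in\Sigma$, introduce the submonoid
\[
M_C^{(m)} := \{a\in M_C : m\mid\deg a\},
\]
where $\deg$ refers to the $\Z$-grading on $k[\Mcc]$ coming from the fixed homogeneous system of generators. Setting $\Mcc^{(m)}=(M_C^{(m)})_{C\in\Sigma}$, I would argue (a) that $\Mcc^{(m)}$ is a monoidal complex supported on $\Sigma$, and (b) that $k[\Mcc^{(m)}]=k[\Mcc]^{(m)}$ as $k$-algebras.

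For (a), the inclusion $M_C^{(m)}\subseteq C\cap\Z^d$ is immediate, the cone condition $\R_+ M_C^{(m)}=C$ follows because $ma\in M_C^{(m)}$ for every $a\in M_C$, and the compatibility $M_D^{(m)}=M_C^{(m)}\cap D$ for $D\subseteq C$ is obtained by intersecting the already known equality $M_D=M_C\cap D$ with the set of elements of degree divisible by $m$. The only point needing a short argument is the finite generation of $M_C^{(m)}$: since $M_C$ is generated by finitely many degree-$1$ elements, any $a\in M_C^{(m)}$ of old degree $km$ is a sum of $km$ such generators which can be partitioned into $k$ blocks of $m$, exhibiting $a$ as a sum of $k$ elements of $M_C$ of degree $m$. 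Hence $M_C^{(m)}$ is generated, as a monoid, by the (finite) set of elements of $M_C$ of degree exactly $m$; this set also furnishes a homogeneous system of generators for $\Mcc^{(m)}$ in the rescaled grading (with new degree $1$ for old degree $m$).

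Step (b) is an identification of $k$-algebras on the level of bases: both $k[\Mcc]^{(m)}$ and $k[\Mcc^{(m)}]$ have the monomial $k$-basis $\{t^a : a\in\cup_{C\in\Sigma} M_C,\ m\mid\deg a\}$, and the toric face multiplication on $k[\Mcc]$ restricts on this basis to the defining multiplication of $k[\Mcc^{(m)}]$. Hence the two rings coincide, proving the claim. The main (and mild) obstacle is the finite generation bookkeeping above; everything else is just unwinding the definitions of toric face ring and Veronese subring.
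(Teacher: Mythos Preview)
Your proof is correct and follows the same overall strategy as the paper's: keep the fan $\Sigma$ unchanged and replace each $M_C$ by a submonoid so that the resulting monoidal complex realizes the Veronese. The difference lies in the choice of submonoid. The paper sets $M_C^{(m)}=mM_C:=\{ma:a\in M_C\}$ and simply asserts $k[\Mcc^{(m)}]\cong k[\Mcc]^{(m)}$, whereas your definition $M_C^{(m)}=\{a\in M_C:m\mid\deg a\}$ is the one that actually matches the Veronese subring. For instance, with $M_C=\N^2$ generated in degree~$1$ and $m=2$, the element $(1,1)$ has degree~$2$ and hence $t^{(1,1)}\in k[M_C]^{(2)}$, but $(1,1)\notin 2\N^2$; taken literally, the paper's definition would yield $k[2\N^2]\cong k[x,z]$ rather than $k[\N^2]^{(2)}\cong k[x,y,z]/(xz-y^2)$. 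Your partitioning argument---grouping $km$ degree-$1$ generators into $k$ blocks of size $m$---both establishes finite generation of $M_C^{(m)}$ and exhibits the degree-$m$ elements of each $M_C$ as a homogeneous system of generators for $\Mcc^{(m)}$. So your argument is not only correct but more careful than the paper's sketch on this point.
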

\begin{proof}
Consider the Veronese $\Mcc^{(m)}$ of the monoidal complex $\Mcc$ defined as follows. For each $C\in \Sigma$, we let 
$M^{(m)}_C= mM_C:=\{ma:a\in M_C\}$. It is easy to check that the affine monoids $M^{(m)}_C, C\in \Sigma$ form a monoidal complex
$\Mcc^{(m)}$ supported on $\Sigma$. Moreover $k[\Mcc^{(m)}] \cong k[\Mcc]^{(m)}.$ Hence $k[\Mcc]^{(m)}$ is also a toric face ring.
\end{proof}
\begin{prop}
\label{tensor_product_of_toric_face_rings}
For two monoidal complexes $\Mcc, \Ncc$ supported on the fans $\Sigma \subseteq \R^d$, $\Gamma \subseteq \R^{e}$, the tensor product 
$k[\Mcc] \otimes_k k[\Ncc]$ is also a toric face ring.
\end{prop}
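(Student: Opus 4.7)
The plan is to construct an explicit product fan and product monoidal complex and then verify that its toric face ring realizes the tensor product. Let $\Sigma \times \Gamma$ denote the collection of cones $C \times D \subseteq \R^{d+e}$ for $C \in \Sigma$ and $D \in \Gamma$. For each such cone set $P_{C \times D} := M_C \times N_D \subseteq \Z^{d+e}$, and write $\Pc := \Mcc \times \Ncc$ for the resulting collection.

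First I would check that $\Sigma \times \Gamma$ is a rational pointed fan in $\R^{d+e}$. Rationality and pointedness are immediate coordinate-wise. For the fan axioms: any face of $C \times D$ has the form $C' \times D'$ with $C' \leq C$, $D' \leq D$, so the face-closure axiom reduces to the corresponding statements for $\Sigma$ and $\Gamma$; and for two product cones one has $(C \times D) \cap (C' \times D') = (C \cap C') \times (D \cap D')$, which is either empty or a common face of both, again by the fan axioms of $\Sigma$ and $\Gamma$. Then I would verify that $\Pc$ is a monoidal complex supported on $\Sigma \times \Gamma$: each $M_C \times N_D$ is clearly an affine monoid generating $C \times D$, and the compatibility condition follows from
\[
(M_C \times N_D) \cap (C' \times D') = (M_C \cap C') \times (N_D \cap D') = M_{C'} \times N_{D'},
\]
using that $\Mcc$ and $\Ncc$ are themselves monoidal complexes.

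Next I would exhibit the isomorphism. Define a $k$-linear map $\psi \colon k[\Pc] \to k[\Mcc] \otimes_k k[\Ncc]$ on basis elements by $t^{(a,b)} \mapsto t^a \otimes t^b$ for $(a,b) \in |\Pc|$. Since $|\Pc| = \bigcup_{C,D}(M_C \times N_D)$ consists exactly of pairs $(a,b)$ with $a \in |\Mcc|$ and $b \in |\Ncc|$, the map $\psi$ sends a $k$-basis to a $k$-basis, hence is a $k$-linear isomorphism. To see it is a ring homomorphism, note that on the $k[\Pc]$ side the product $t^{(a,b)} \cdot t^{(a',b')}$ is nonzero precisely when $(a,b)$ and $(a',b')$ lie in a common cone $C \times D$, which happens iff $a,a' \in M_C$ for some $C$ and $b,b' \in N_D$ for some $D$; this is exactly the condition under which $(t^a \otimes t^b)(t^{a'} \otimes t^{b'}) = t^{a+a'} \otimes t^{b+b'}$ is nonzero in the tensor product, and the values then agree by construction.

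The main obstacle is essentially conceptual rather than technical: one has to identify the right fan and monoidal complex and make sure that the combinatorial condition ``$(a,b)$ and $(a',b')$ are in a common cone of $\Sigma \times \Gamma$'' matches the componentwise condition coming from the tensor product. Once the product fan is set up, all the verifications reduce to the corresponding statements for $\Mcc$ and $\Ncc$ together with the trivial fact that faces of $C \times D$ are products of faces. Note also that if $\{a_i\}$ and $\{b_j\}$ are homogeneous systems of generators of $\Mcc$ and $\Ncc$, then $\{(a_i,0)\} \cup \{(0,b_j)\}$ is one for $\Pc$, so homogeneity is preserved.
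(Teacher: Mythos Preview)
Your proof is correct and takes essentially the same approach as the paper: the paper builds a ``join'' complex $J(\Mcc,\Ncc)$ by embedding $\R^d$ and $\R^e$ as complementary subspaces of $\R^{d+e}$ and taking $J(C,D)=\conv(C\cup D)$ and $J(M_C,N_D)=\langle M_C,N_D\rangle$, which in this situation are exactly your $C\times D$ and $M_C\times N_D$; the isomorphism $t^m\otimes t^n\mapsto t^{(m,n)}$ is the inverse of your $\psi$. Your verification of the fan and monoidal-complex axioms via product faces and product intersections is a bit more explicit than the paper's, but the construction and the argument are the same.
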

\begin{proof}
Assume that the affine monoids of $\Mcc$ are $M_C$ where $C\in \Sigma$, and the monoids of $\Ncc$ are $N_D$ where $D\in \Gamma$.

We will construct a {\em join} monoidal complex $J(\Mcc,\Ncc)$ of $\Mcc$ and $\Ncc$. This construction is the same as building a pyramid after given 
a base polytope and an apex when $\Ncc=\N \subseteq \R$. Embed $\R^d$ and $\R^{e}$ into $\R^{d+e}$ as vector 
subspaces such that they intersect only at the origin.

For each $C\in \Sigma, D \in \Gamma$, let $J(M_C,N_D)$ be the submonoid of $\Z^{d+e}$ generated by $M_C$ and $N_D$, considered as subsets of 
$\Z^{d+e}$. Moreover, let $J(C,D)$ be the convex hull of $C$ and $D$. Then $J(C,D)$ is a cone in $\R^{d+e}$. 

We observe that the cones $J(C,D)$ form a fan $J(\Sigma, \Gamma)$ in $\R^{d+e}$. To see this, choose arbitrary $C,C'\in \Sigma, D,D'\in \Gamma$. 
Since $\R^d \cap \R^e=\{0\}$, we have $\R^d \cap J(C,D)=C$ and $\R^e\cap J(C,D)=D$. So $J(C,D)$ is a face of the cone $J(C',D')$ if and only
 if $C\subseteq C'$ and $D\subseteq D'$. 

Moreover, the monoids $J(M_C,N_D)$ (where $C\in \Sigma, D\in \Gamma$) form a monoidal complex $J(\Mcc,\Ncc)$ supported on $J(\Sigma, \Gamma)$. In fact, 
for each $C,C'\in \Sigma, D,D' \in \Gamma$ with $C\subseteq C',D\subseteq D'$, we have 
\[
J(M_{C'},N_{D'}) \cap J(C,D) =J(M_C,N_D).
\]
Finally, we will show that $k[\Mcc]\otimes_k k[\Ncc] \cong k[J(\Mcc,\Ncc)]$. Indeed, the isomorphism is given by 
\[
t^{m}\otimes t^{n} \mapsto t^{(m,n)}, ~m\in |\Mcc|, n\in |\Ncc|.
\]
Hence $k[\Mcc] \otimes_k k[\Ncc]$ is a toric face ring.
\end{proof}
\begin{cor}
\label{polynomial_extension_of_toric_face_rings}
Polynomial extensions of a toric face ring are also toric face rings.
\end{cor}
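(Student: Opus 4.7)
The plan is to deduce this directly from Proposition~\ref{tensor_product_of_toric_face_rings}. Any polynomial extension of $k[\Mcc]$ has the form $k[\Mcc][X_1,\ldots,X_r] \cong k[\Mcc]\otimes_k k[X_1,\ldots,X_r]$, so it suffices to exhibit the polynomial ring $k[X_1,\ldots,X_r]$ itself as a toric face ring and then invoke the previous proposition.

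For this, I would appeal to Example~\ref{asr}: the polynomial ring $k[X_1,\ldots,X_r]$ is the affine monoid ring $k[\N^r]$, associated to the fan consisting of the single cone $C=\R_+^r \subseteq \R^r$ together with all of its faces, and to the monoidal complex $\Ncc$ with $N_D = D\cap \Z^r$ for each face $D$ of $C$. This presentation is homogeneous, with the standard unit vectors as a homogeneous system of generators. Combining this with Proposition~\ref{tensor_product_of_toric_face_rings} applied to the pair $(\Mcc,\Ncc)$, the tensor product $k[\Mcc]\otimes_k k[\N^r]$, which is the join toric face ring $k[J(\Mcc,\Ncc)]$ sitting inside $\R^{d+r}$, is the desired polynomial extension.

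There is essentially no obstacle: both steps (the identification of a polynomial ring as an affine monoid ring, and the application of Proposition~\ref{tensor_product_of_toric_face_rings}) are immediate once the right objects are named. The only minor bookkeeping item is to check that the isomorphism $k[\Mcc]\otimes_k k[\N^r]\cong k[\Mcc][X_1,\ldots,X_r]$ matches the natural multigrading, which is clear from the construction of $J(\Mcc,\Ncc)$ since the embeddings of $\R^d$ and $\R^r$ into $\R^{d+r}$ are chosen to be independent.
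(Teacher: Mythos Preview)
Your proof is correct and follows the same approach as the paper: both reduce to Proposition~\ref{tensor_product_of_toric_face_rings} by realizing the polynomial ring as an affine monoid ring. The only cosmetic difference is that the paper takes $\Ncc=\N\subseteq\R$ (adjoining one variable, with the general case implicit by iteration), whereas you adjoin all $r$ variables at once via $\Ncc=\N^r\subseteq\R^r$.
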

\begin{proof}
Apply Proposition \ref{tensor_product_of_toric_face_rings} for $\Mcc$ and $\Ncc=\N \subseteq \R$.
\end{proof}
\begin{prop}
\label{fiber_product_of_toric_face_rings}
The fiber product over $k$ of two toric face rings is a toric face ring.
\end{prop}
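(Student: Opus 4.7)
The plan is to place the two fans in complementary subspaces of $\R^{d+e}$ meeting only at the origin, so that their union forms a new fan whose toric face ring realises the fiber product automatically.

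Say $\Sigma \subseteq \R^d$ supports $\Mcc=\{M_C\}_{C\in \Sigma}$ and $\Gamma \subseteq \R^e$ supports $\Ncc=\{N_D\}_{D\in \Gamma}$. Embed $\R^d$ and $\R^e$ into $\R^{d+e}$ as coordinate subspaces meeting only at the origin, and let $\Sigma\vee\Gamma$ be the collection of all cones of $\Sigma$ together with all cones of $\Gamma$, viewed inside $\R^{d+e}$. For $C\in \Sigma$ and $D\in \Gamma$ one has $C\cap D=\{0\}$, which is a face of each, so $\Sigma\vee\Gamma$ is a rational pointed fan. Assigning $M_C$ to $C$ and $N_D$ to $D$ yields a monoidal complex $\Mcc\vee\Ncc$ supported on $\Sigma\vee\Gamma$; the only compatibility to check beyond what already holds inside $\Sigma$ or $\Gamma$ is at the apex $\{0\}$, which is immediate since $M_{\{0\}}=N_{\{0\}}=\{0\}$.

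Next I would show
\[
k[\Mcc\vee\Ncc] \;\cong\; k[\Mcc]\times_k k[\Ncc].
\]
The $k$-basis of $k[\Mcc\vee\Ncc]$ is $\{1\}\cup\{t^a: 0\ne a\in |\Mcc|\}\cup\{t^b: 0\ne b\in |\Ncc|\}$, which matches the decomposition $k\oplus k[\Mcc]_+\oplus k[\Ncc]_+$ of the fiber product. For the multiplication, the product of two basis elements both on the $\Sigma$-side is governed by $k[\Mcc]$, both on the $\Gamma$-side by $k[\Ncc]$, and a \emph{mixed} product $t^a\cdot t^b$ with $0\ne a\in |\Mcc|$ and $0\ne b\in |\Ncc|$ vanishes because no cone of $\Sigma\vee\Gamma$ contains both $a$ and $b$. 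This exactly reproduces the fiber product multiplication.

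The main technical point is verifying that $\{0\}$ is genuinely a common face of every pair $(C,D)$ with $C\in \Sigma$, $D\in \Gamma$, and hence that the mixed products vanish in $k[\Mcc\vee\Ncc]$. This follows from the pointedness of the cones and the complementary embedding of $\R^d$ and $\R^e$ in $\R^{d+e}$. Once this is in hand, the graded structure carries over automatically: a homogeneous system of generators of $\Mcc$ together with one of $\Ncc$ gives a homogeneous system of generators of $\Mcc\vee\Ncc$, inducing the expected $\Z$-grading on the fiber product.
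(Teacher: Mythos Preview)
Your proposal is correct and follows essentially the same route as the paper: embed $\R^d$ and $\R^e$ into $\R^{d+e}$ so that they meet only at the origin, take the union of the two fans and of the two monoidal complexes, and identify the resulting toric face ring with the fiber product. The only difference is cosmetic (you write $\Sigma\vee\Gamma$ and $\Mcc\vee\Ncc$ where the paper writes $\Sigma\cup\Gamma$ and $\Mcc\cup\Ncc$) and you supply more detail in checking the fan axioms and the multiplication, whereas the paper leaves these verifications implicit and instead records the explicit polynomial presentation of the fiber product.
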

\begin{proof}
Recall that if $A=k[x_1,\ldots,x_n]/I$ and $B=k[y_1,\ldots,y_m]$ are $k$-algebras where $I\subseteq k[x_1,\ldots,x_n], 
J\subseteq k[y_1,\ldots,y_m]$ are ideals of polynomial rings, then the fiber product of $A$ and $B$ is the pull-back of the 
diagram $A \rightarrow k \leftarrow B$. Concretely, we have
\[
A \times_k B \cong k[x_1,\ldots,x_n,y_1,\ldots,y_m]/(I+J+(x_iy_j:1\le i\le n,1\le j \le m)).
\]
Let $\Mcc, \Ncc$ be monoidal complexes supported on the fans $\Sigma \subseteq \R^d, \Gamma \subseteq \R^e$ respectively. Embed 
$\R^d$ and $\R^e$ as vector subspaces into $\R^{d+e}$ such that $\R^d \cap \R^e=\{0\}$. We consider the union $\Sigma \cup \Gamma$ 
as a fan in $\R^{d+e}$. Consider the union $\Mcc \cup \Ncc$ as a monoidal complex supported on $\Sigma \cup \Gamma$. Then we have 
\[
k[\Mcc \cup \Ncc] \cong k[\Mcc] \times_k k[\Ncc].
\]
Hence $k[\Mcc] \times_k k[\Ncc]$ is a toric face ring.
\end{proof}
\begin{prop}
\label{Segre_toric_face_rings}
The Segre product of two homogeneous toric face rings over $k$ is again a toric face ring.
\end{prop}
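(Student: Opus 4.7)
The plan is to construct a monoidal complex $\Mcc \ast \Ncc$ supported on a fan $\Sigma \ast \Gamma$ in $\R^{d+e}$ such that $k[\Mcc \ast \Ncc] \cong k[\Mcc] \ast k[\Ncc]$ as $\Z^{d+e}$-graded $k$-algebras. Embed $\R^d$ and $\R^e$ in $\R^{d+e}$ with trivial intersection as in Proposition \ref{tensor_product_of_toric_face_rings}. The homogeneous systems of generators yield linear degree maps $\deg_M \colon \R^d \to \R$ and $\deg_N \colon \R^e \to \R$, so that $H = \{(x, y) \in \R^{d+e} : \deg_M(x) = \deg_N(y)\}$ is a rational linear hyperplane. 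For each $(C, D) \in \Sigma \times \Gamma$ set
\[
K_{(C, D)} := J(C, D) \cap H, \qquad P_{(C, D)} := J(M_C, N_D) \cap H,
\]
with $J(C,D)$ and $J(M_C, N_D)$ as in Proposition \ref{tensor_product_of_toric_face_rings}.

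First I would check that $\Sigma \ast \Gamma := \{K_{(C, D)} : (C, D) \in \Sigma \times \Gamma\}$ is a rational pointed fan. Each $K_{(C, D)}$ is rational pointed, being the intersection of the rational pointed cone $J(C, D)$ with a rational linear subspace. The fan axioms descend from $J(\Sigma, \Gamma)$ via the observation that intersection with a linear subspace through the origin commutes both with taking intersections of cones and with taking faces. In particular,
\[
K_{(C_1, D_1)} \cap K_{(C_2, D_2)} = K_{(C_1 \cap C_2, D_1 \cap D_2)}
\]
is a common face of the two, and every face of $K_{(C, D)}$ is of the form $K_{(C', D')}$ with $C' \preceq C$, $D' \preceq D$.

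Second I would verify that the $P_{(C, D)}$ form a monoidal complex supported on $\Sigma \ast \Gamma$. The key tool is a transportation argument: if $a_i$ run over the homogeneous generators in $M_C$ and $b_j$ over those in $N_D$, and $(m, n) \in P_{(C,D)}$ has common degree $t$, write $m = \sum \lambda_i a_i$, $n = \sum \mu_j b_j$; since $\sum \lambda_i = \sum \mu_j = t$ one can always find nonnegative coefficients $\gamma_{ij}$ with prescribed row and column sums, yielding $(m, n) = \sum_{i,j} \gamma_{ij} (a_i, b_j)$, and with $\gamma_{ij} \in \N$ when the $\lambda_i, \mu_j$ are integral. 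This simultaneously shows that $P_{(C, D)}$ is finitely generated by the pairs $(a_i, b_j)$ and that $\R_+ P_{(C, D)} = K_{(C, D)}$. The compatibility $P_{(C', D')} = P_{(C, D)} \cap K_{(C', D')}$ for $(C', D') \preceq (C, D)$ is immediate from the compatibilities of $\Mcc$ and $\Ncc$.

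Finally, the $k$-linear map $t^{(m, n)} \mapsto t^m \otimes t^n$ provides the desired $\Z^{d+e}$-graded isomorphism: both algebras have $k$-bases indexed by pairs $(m, n) \in |\Mcc| \times |\Ncc|$ of matching degrees lying in some common $M_C \times N_D$, and the multiplication rules agree componentwise. The main obstacle is the face-closure claim in Step~1, namely that every face of $K_{(C, D)}$ has the form $K_{(C', D')}$. This relies on the fact that homogeneity forces $\deg_M$ and $\deg_N$ to be strictly positive on the non-zero parts of $C$ and $D$ respectively, so the hyperplane $H$ cuts $J(C, D)$ cleanly along the sub-cones coming from pairs of proper faces rather than producing new facial structure.
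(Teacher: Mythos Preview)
Your construction is the paper's: define $P_{(C,D)}=M_C\ast N_D=\{(m,n)\in M_C\times N_D:\deg m=\deg n\}$, check these form a monoidal complex on the fan of cones they span, and identify its toric face ring with $k[\Mcc]\ast k[\Ncc]$. You give considerably more justification than the paper's one-paragraph sketch; the transportation argument for finite generation and the face-structure discussion are genuine additions.

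There is one real gap. You claim the homogeneous generators yield a \emph{linear} map $\deg_M\colon\R^d\to\R$, but the paper explicitly notes (end of Section~\ref{Toricfacering}) that the $\Z$-grading need not be compatible with the ambient $\Z^d$-grading. A concrete obstruction: take $\Sigma$ in $\R^1$ with maximal cones $\R_{\ge 0}$ and $\R_{\le 0}$, generators $1$ and $-1$ each of degree~$1$; no linear functional on $\R$ sends both to $1$. What \emph{is} true is that $\deg_M$ restricts to a linear functional $\ell_C$ on each $\R C$, and these agree on overlaps. Two clean repairs are available: either (a) replace the global $H$ by the local hyperplanes $H_{(C,D)}=\{\ell_C(x)=\ell_D(y)\}$ and observe that $H_{(C,D)}$ and $H_{(C',D')}$ agree on $J(C\cap C',\,D\cap D')$, so your fan and face arguments go through verbatim since you only ever slice one $J(C,D)$ at a time; or (b) first re-embed $\Mcc$ in $\R^{d+1}$ via $a_i\mapsto(a_i,1)$, after which the degree is the last coordinate and your global $H$ exists as written.
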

\begin{proof}
Firstly, we note that the Segre product of two homogeneous affine monoid rings over $k$ is again an affine monoid ring over $k$. Concretely, let $M$ and $N$ are two homogeneous affine monoids, then the Segre product $k[M]*k[N]=k[M*N]$, where $M*N$ is the submonoid of the direct sum $M\times N$ consisting of elements $(u,v)$ with $\deg(u)=\deg(v)$.

We can generalize this construction for any two homogeneous toric face rings $k[\Mcc], k[\Ncc]$. For any two cones $C\in \Sigma, D\in \Gamma$, let $C*D$ be the cone generated by $M_C*N_D$. Let $\Sigma * \Gamma$ be the collection of cones $C*D$ and $\Mcc *\Ncc$ be the collection of affine monoids $M_C*N_D$ for such cones $C,D$. Then 
$\Mcc *\Ncc$ is a homogeneous monoidal complex supported on the fan $\Sigma * \Gamma$, and $k[\Mcc*\Ncc] \cong k[\Mcc]* k[\Ncc]$.
\end{proof}
\section{Betti numbers of the residue field}
\label{Bettinumbers}
Laudal and Sletsj\o{}e \cite[Prop.~1.3]{LS} computed Betti numbers
of affine monoid rings. The result was later reproved and extended in \cite[(1.2)]{PRS} and \cite[Thm.~2.1]{HRW}.
In this section, using the method employed in \cite{PRS} and \cite{HRW}, we find the Betti numbers of $k$ as a $k[\Mcc]$-module 
in the natural monoid grading (to be defined below).

Recall that $I$ is the defining ideal of $k[\Mcc]$ and $B_{\Mcc}$ is the ideal generated by pure binomials in $I$. 
\begin{defn}[The associated monoid]
Define a relation $\sim$ in $\N^n$: $a \sim b$ if and only if $X^a-X^b \in B_{\mathcal M}$. 
This relation is compatible with vector sum: $a\sim b$ implies $a+c \sim b+c$ for $a, b, c \in \N^n$. Let
$H$ be the monoid whose elements are equivalent classes of $\N^n/\sim$ and the addition inherited from that of $\N^n$. Then 
we call $H$ the monoid associated with $\Mcc$ and the system of generators $a_1,\ldots,a_n$.
\end{defn}
In general, $H$ depends both on $\Mcc$ and on the choice of the system of generators.
\begin{rem}
In Example \ref{asr}, the monoidal complex $\mathcal{M}$ is a positive affine monoid $M$, and we can choose $\{a_1,\ldots,a_n\}$ to be the minimal set of generators of $M$. Here $H=M$ and $B_{\mathcal M}$ is the toric ideal defining $k[M]$. In other words, the $H$-grading is the monoid grading induced by $M$. 
In Example \ref{Sta-Rei}, we have $B_{\mathcal M}=0$ and $H=\N^n$; the $H$-grading is simply the $\Z^n$-grading.
\end{rem}

We say that a monoid is {\em positive} if for elements $\lambda, \mu$ of this monoid with $\lambda+\mu=0$, we must have $\lambda=\mu=0$.
 We say that a monoid is {\em cancellative with respect to $0$} if an equation $\lambda+\mu=\lambda$ in the monoid implies that $\mu=0$. 
It is not hard to see that $H$ is a commutative positive monoid.
\newpage
\begin{lem}[\cite{BKR}, Lem.~4.4]
\label{monoid_H}
Denote the class of $a\in \N^n$ in $H$ by $\overline{a}$. We have:
\begin{enumerate}
\item 
If $\overline{a}+\overline{c}=\overline{b}+\overline{c}$ for $a,b,c \in \N^n$ then $X^a-X^b \in I$.
\item
$H$ is cancellative w.r.t.~ $0$.
\item
If $X^a-X^b \in I$ and $X^a, X^b \notin I$ then $\overline{a}=\overline{b}$ in $H$.
\end{enumerate}
\end{lem}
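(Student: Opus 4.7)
The approach is to treat the three items in the order (iii), (i), (ii), using throughout the algebra retracts $\psi_C \colon S \to k[M_C]$ to the facet monoid rings and the embedding $k[\Mcc] \hookrightarrow \prod_{C\ \text{facet}} k[M_C]$.

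For (iii), the hypothesis $X^a, X^b \notin I$ places $\phi(X^a) = t^{\pi(a)}$ and $\phi(X^b) = t^{\pi(b)}$ as nonzero basis elements, and $X^a - X^b \in I$ forces $\pi(a) = \pi(b)$ as vectors in $\Z^d$. Let $C^*$ be the smallest cone of $\Sigma$ containing this common value, and pick a cone $\tilde C_a \in \Sigma$ with $\operatorname{supp}(a) \subseteq F_{\tilde C_a}$. Since $C^*$ is a face of $\tilde C_a$ and $\pi(a) = \sum_j a_j a_j$ is a positive combination of generators in $M_{\tilde C_a}$ whose sum lies in that face, the facial decomposition property of pointed cones forces each $a_j$ (for $j \in \operatorname{supp}(a)$) to lie in $M_{C^*}$; the same applies to $b$. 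Hence $\operatorname{supp}(a)$ and $\operatorname{supp}(b)$ are both contained in $F_{C^*}$. Using that $\{a_1, \ldots, a_n\}$ is a homogeneous system of generators, I write $\pi(a) = \sum_{j \in F_{C^*}} d_j a_j$ to obtain a monomial $X^d \in S_{C^*}$ with $\psi_{C^*}(X^d) = t^{\pi(a)}$; then $X^d - X^a$ and $X^d - X^b$ are pure binomials in $I_{C^*} \subseteq B_\Mcc$, so $X^a - X^b \in B_\Mcc$ and $\overline a = \overline b$ in $H$.

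For (i), the embedding reduces the conclusion $X^a - X^b \in I$ to showing $\psi_C(X^a) = \psi_C(X^b)$ in each domain $k[M_C]$ as $C$ ranges over facets of $\Sigma$. The hypothesis $X^{a+c} - X^{b+c} \in B_\Mcc \subseteq I \subseteq \ker \psi_C$ yields $\psi_C(X^a)\psi_C(X^c) = \psi_C(X^b)\psi_C(X^c)$ in $k[M_C]$, and domain cancellation settles the component whenever $\psi_C(X^c) \ne 0$. For the remaining subcase $\psi_C(X^c) = 0$, I unfold $X^{a+c} - X^{b+c} \in B_\Mcc$ as a chain of elementary moves $\gamma_0 = a+c, \gamma_1, \ldots, \gamma_m = b+c$ with each step realized by a pure binomial $X^{v_k} - X^{w_k} \in I_{C_{i_k}}$, noting that $\psi_C$ is constant along the chain. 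Using the facial decomposition property once more to constrain how the supports of $v_k, w_k$ interact with $F_C$, I can rule out mismatches of the form $\operatorname{supp}(a) \subseteq F_C$ yet $\operatorname{supp}(b) \not\subseteq F_C$; in the case where both supports sit in $F_C$, part (iii) applied inside the domain $k[M_C]$ delivers $\psi_C(X^a) = \psi_C(X^b)$.

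For (ii), specializing (i) to $b = 0$ forces $X^a - 1 \in I$, so $\phi(X^a) = 1 \in k[\Mcc]$; since a monomial image is either zero or a single basis element $t^\gamma$, equality with $1 = t^0$ gives $\pi(a) = 0$. But $\pi(a) = \sum_j a_j a_j$ is a nonnegative combination of nonzero generators in a pointed (hence positive) affine monoid, so $\pi(a) = 0$ forces every coefficient $a_j$ to vanish, whence $a = 0$ and $\overline a = \overline 0$ in $H$. The main obstacle is the mismatch subcase of (i) where $\psi_C(X^c) = 0$ voids the domain cancellation trick; one genuinely has to exploit the chain-of-moves presentation of $B_\Mcc$ together with the combinatorics of the fan $\Sigma$ to maintain consistency of supports under the pure binomial moves.
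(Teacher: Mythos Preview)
The paper does not prove this lemma at all; it is quoted verbatim from \cite[Lem.~4.4]{BKR}. So there is no ``paper's own proof'' to compare against, and your attempt must stand on its own.

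Your argument for (iii) is correct (the auxiliary monomial $X^d$ is unnecessary, since you have already shown $\operatorname{supp}(a),\operatorname{supp}(b)\subseteq F_{C^*}$, so $X^a-X^b\in I_{C^*}\subseteq B_{\Mcc}$ directly). Your argument for (ii) is also correct, granted (i).

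The problem is (i). First, in the sub-case where $\psi_C(X^c)=0$ but both $\operatorname{supp}(a),\operatorname{supp}(b)\subseteq F_C$, invoking ``part (iii) applied inside $k[M_C]$'' is circular: (iii) needs $X^a-X^b\in I$ as a hypothesis, which is exactly what you are proving. The correct argument here is much simpler: $B_{\Mcc}$ is $\Z^d$-homogeneous and contains no monomials, so $X^{a+c}-X^{b+c}\in B_{\Mcc}$ forces $\pi(a+c)=\pi(b+c)$ in $\Z^d$, hence $\pi(a)=\pi(b)$, and then $\psi_C(X^a)=t^{\pi(a)}=t^{\pi(b)}=\psi_C(X^b)$.

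Second, and more seriously, the ``mismatch'' sub-case ($\operatorname{supp}(a)\subseteq F_C$ but $\operatorname{supp}(b)\not\subseteq F_C$) is not handled. You assert that it can be ``ruled out'' via the chain-of-moves presentation and the facial decomposition, and you flag it again at the end as ``the main obstacle,'' but no argument is given. Observe that once $\pi(a)=\pi(b)$ is known, this mismatch forces $X^b\in A_{\Mcc}$ (by the same facial argument you used in (iii)), so what you really need to exclude is the configuration $\overline{a+c}=\overline{b+c}$ with $X^a\notin A_{\Mcc}$ and $X^b\in A_{\Mcc}$. Your chain $\gamma_0,\ldots,\gamma_m$ satisfies $\psi_C(X^{\gamma_k})=0$ for all $k$ in this sub-case, so ``$\psi_C$ is constant along the chain'' yields no information. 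Tracking how individual coordinates outside $F_C$ evolve under moves supported in various cones $E_k$ is genuinely delicate, because a move in $E_k$ with $E_k\cap C$ a nontrivial face can shuffle mass between $F_C$ and its complement. As written, this is a gap, not a proof.
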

It is easy to see that $S=k[X_1,\ldots,X_n]$ and $R=k[\mathcal{M}]$ are $H$-graded. Note that $S/B_{\mathcal M}$ is exactly the monoid 
algebra $k[H]$ of $H$. 

Denote by $J$ the ideal $I/B_{\mathcal M}$ of $k[H]$. Let $e_g, g=1,\ldots,n$ be the standard basis vectors in $\R^n$. Then $J$ is
a {\em monomial ideal} of $k[H]$ in the sense that it is generated by elements $\overline{\sum _{g\in G}e_g}$ in $H$, where $G\subseteq [n]$
 such that $\prod_{ g\in G}a_g=0$. From Proposition \ref{def-ideal}, we get $k[{\mathcal M}]= k[H]/J$.

For elements $\lambda,\mu\in H$, we say that $\lambda < \mu$ if $\lambda\neq \mu$ and $\mu-\lambda\in H$. Then $(H,<)$ is a partially
 ordered set. It follows from \ref{monoid_H}(ii) that $<$ is a strict order.
\begin{defn}[The order complexes of divisor posets]
For each $\lambda \in H$, denote by $\Delta_{\lambda}$ the set of chains $\alpha_1 <\cdots <\alpha_i \in H$ such that
 $0=\alpha_0<\alpha_1$ and $\alpha _i < \lambda=\alpha_{i+1}$.

Let $\Delta _{\lambda, J}$ be the subset of $\Delta_{\lambda}$ consisting of chains $0=\alpha_0< \alpha_1 <\cdots
 <\alpha_i< \lambda=\alpha_{i+1}$ in $\Delta_{\lambda}$ such that for some $0\le j\le i$, the element $X^{\alpha _{j+1} -\alpha _j}$, as an element of 
 the group ring $k[H]$, belongs to $J$. 
 \end{defn}
For each $\lambda \in H$, the sets $\Delta_{\lambda}, \Delta _{\lambda, J}$ are simplicial complexes. Let $\widetilde{H}_{\ell}(
\Delta_{\lambda}, \Delta _{\lambda, J};k)$ be the $\ell$-th reduced simplicial homology with coefficients in $k$ of the pair
$(\Delta_{\lambda}, \Delta _{\lambda, J})$.

Since $\{a_1,\ldots,a_n\}$ is a homogeneous system of generators, there is a function
$|\cdot|: H \to \Z$ mapping $\lambda = \overline{a}\in H$ to $|\lambda|$, which is the sum of coordinates of $a$. This follows from the 
fact that $k[M_C]$ is standard graded for every $C\in \Sigma$.

Finally, we get the formula for graded Betti numbers of toric face rings, and a characterization of the Koszul property.
\begin{thm}
\label{Betti_numbers_Koszulness}
With the above notations, the bi-graded Betti number $\beta^{R}_{i, \lambda}(k)=\dim _k \Tor _i ^{R}(k,k)_\lambda$ of $k$ as an
$H$-graded module is given by
$$
\beta^{R}_{i, \lambda}(k) = \dim _{k} \widetilde{H}_{i-2}(\Delta_{\lambda}, \Delta _{\lambda, J};k),
$$
for every $\lambda \in H$ and $i>0$. 

In particular, the following statements are equivalent:
\begin{enumerate}
\item 
$k[\mathcal M]$ is a Koszul algebra;
\item
$\widetilde{H}_{i-2}(\Delta_{\lambda}, \Delta _{\lambda, J};k)=0$ for every $i>0$ and $\lambda \in H$ such that $|\lambda|>i.$
\end{enumerate}
\end{thm}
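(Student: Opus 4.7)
The plan is to adapt the bar-resolution approach of Laudal--Sletsj{\o}e and Herzog--Reiner--Welker to the $H$-graded setting. Write $R=k[H]/J$ as in the preceding discussion; since $J$ is generated by monomials in $k[H]$, the ring $R$ inherits a natural $H$-grading with one-dimensional homogeneous components, and the maximal ideal $R_+$ has a $k$-basis $\{t^\lambda : \lambda \in H, \lambda \neq 0\}$ modulo those $\lambda$ with $t^\lambda \in J$. I would compute $\Tor^R_i(k,k)_\lambda$ using the normalized (reduced) bar complex $\Bc_\bullet$: in homological degree $i$, a $k$-basis is the set of bars $[\mu_1|\mu_2|\cdots|\mu_i]$ with each $\mu_j \in H\setminus\{0\}$ and $t^{\mu_j} \notin J$, with differential given by the usual alternating sum, except that a term collapses to zero whenever the product $t^{\mu_j}\cdot t^{\mu_{j+1}}$ lands in $J$ (equivalently, whenever the monomial $t^{\mu_j+\mu_{j+1}}$ is zero in $R$).

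Next, since the bar complex is $H$-graded, the piece in multidegree $\lambda$ has basis the bars with $\mu_1+\cdots+\mu_i = \lambda$. Encoding such a bar by its sequence of partial sums $0=\alpha_0<\alpha_1<\cdots<\alpha_{i-1}<\alpha_i=\lambda$ (strict by positivity of $H$ via Lemma 4.4), I identify the degree-$\lambda$ piece of $\Bc_\bullet$ with the augmented chain complex of the order complex of chains in $H$ strictly between $0$ and $\lambda$, shifted so that an $i$-bar corresponds to an $(i-2)$-simplex. The key point is that the ``vanishing'' relation in the bar differential is exactly the subcomplex $\Delta_{\lambda,J}$: a chain lies in $\Delta_{\lambda,J}$ iff some consecutive difference $\alpha_{j+1}-\alpha_j$ satisfies $t^{\alpha_{j+1}-\alpha_j}\in J$, iff the corresponding bar represents $0$ in the normalized quotient. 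Passing to the quotient complex $\Bc_\bullet(\lambda)$ thus yields precisely the relative simplicial chain complex of the pair $(\Delta_\lambda,\Delta_{\lambda,J})$. Therefore
\[
\beta^R_{i,\lambda}(k)=\dim_k H_i(\Bc_\bullet(\lambda))=\dim_k \widetilde{H}_{i-2}(\Delta_\lambda,\Delta_{\lambda,J};k),
\]
for all $i>0$ and $\lambda\in H$.

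For the Koszul characterization, I would use the $\Z$-grading $|\cdot|:H\to \Z$ induced by the homogeneous system of generators, which is compatible with the standard grading on $R$ because each $k[M_C]$ is standard graded. Then the $i$-th linear strand of $k$ concentrates in total degree $i$, i.e.\ in multidegrees $\lambda$ with $|\lambda|=i$. Koszulness is equivalent to $\beta^R_{i,j}(k)=0$ for all $i\neq j$, which translates via the multigraded formula to the vanishing
\[
\widetilde{H}_{i-2}(\Delta_\lambda,\Delta_{\lambda,J};k)=0\quad\text{for all } i>0,\ \lambda\in H \text{ with } |\lambda|\neq i.
\]
The case $|\lambda|<i$ is automatic: there can be no chain of length $i-1$ strictly between $0$ and $\lambda$ when $|\lambda|<i$, so $\Delta_\lambda$ itself has no $(i-2)$-simplex. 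Hence only the condition $|\lambda|>i$ is substantive, giving the stated equivalence.

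The main obstacle is the careful identification of the normalized bar differential with the relative simplicial boundary, in particular verifying that the ``face-killing'' caused by pairs with $t^{\mu_j+\mu_{j+1}}\in J$ corresponds exactly to quotienting out $\Delta_{\lambda,J}$ rather than merely a larger or smaller subcomplex; for this one needs that in $H$ the condition $t^{\alpha_{j+1}-\alpha_j}\in J$ depends only on the difference $\alpha_{j+1}-\alpha_j$ in $H$, which follows from Lemma 4.4 together with Proposition 2.5 describing generators of $I$.
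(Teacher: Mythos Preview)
Your approach is essentially identical to the paper's: both compute $\Tor^R_\bullet(k,k)$ via the normalized bar resolution of $k$ over $R=k[H]/J$, observe that $\mathbf B\otimes_R k$ is $H$-graded, and identify the degree-$\lambda$ strand with the relative reduced chain complex $\widetilde C_{\bullet-2}(\Delta_\lambda,\Delta_{\lambda,J};k)$ through the partial-sum bijection between bars and chains in $(0,\lambda)$. Your treatment of the Koszul equivalence is actually slightly more detailed than the paper's (which simply declares it ``obvious''): you correctly note that the case $|\lambda|<i$ is vacuous because any chain $0<\alpha_1<\cdots<\alpha_{i-1}<\lambda$ forces $|\lambda|\ge i$, and the only minor imprecision is that the strictness of the chain uses cancellativity with respect to $0$ (part~(ii) of the cited lemma) rather than positivity per~se.
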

\begin{proof}
Similar to the proof of \cite[Thm.~2.1]{HRW}, we will exploit the bar resolution.

Consider the following free resolution of $k$ over $R=k[H]/J$, the so-called {\em bar resolution} (see Maclane \cite[Chap.~10, \S 2]{Ma}):
$$
\textbf{B}: \cdots \to B_i\to B_{i-1} \to \cdots\to B_0 =k.
$$
For each $i$, the module $B_i$ is the free $R$-module with basis elements $[\lambda_1|\cdots|\lambda_i]$, where
$\lambda_j \in (H \setminus J)-\{0\}$ for all $j$. The differential map $d_i:B_i\to B_{i-1}$ is defined on basis elements by the formula:
$$
d_i[\lambda_1|\cdots|\lambda_i] = X^{\lambda_1}[\lambda_2|\cdots|\lambda_i]+\sum _{j=1}^{i-1}(-1)^j[\lambda_1|\cdots|\lambda_j+
\lambda_{j+1}|\cdots|\lambda_i].
$$
Here we will understand the symbol $[\lambda_1|\cdots|\lambda_i]$ to be $0$ if one of the $\lambda_j$ is in $J$.

By definition, the Betti numbers $\Tor _{i}^R(k,k)$ are computed by the homology of $\textbf{B}\otimes k$. We can write the latter complex as
follows:
$$
\textbf{B}\otimes k: \cdots \to B_i\otimes k\to B_{i-1}\otimes k \to \cdots\to B_0\otimes k =k.
$$
The differential of this complex is given by:
$$
(d_i\otimes k)[\lambda_1|\cdots|\lambda_i]=\sum _{1\le j\le i-1}(-1)^j[\lambda_1|\cdots|\lambda_j+
\lambda_{j+1}|\cdots|\lambda_i].
$$
The important observation is that the differential map $d_i\otimes k$ preserves the sum $\lambda = \lambda_1 +\cdots+\lambda_i$. Hence
$\textbf{B}\otimes k$ is $H$-graded and the homology modules of $(\textbf{B}\otimes k)_{\lambda}$ compute $\Tor _{i}^R(k,k)
_{\lambda}.$

Since $<$ is a strict order on $H$, we can identify $[\lambda_1|\cdots|\lambda_i]$ with the chain $\lambda_1 <\lambda_1+\lambda_2
<\cdots<\lambda_1+\lambda_2+\cdots+\lambda_{i-1}$ in the open interval $(0,\lambda)$. Then the differential of the complex $(\textbf{B}\otimes k)_{\lambda}$ is
nothing but the boundary map of the reduced chain complex of pair $\widetilde{C}_{\pnt -2}(\Delta_{\lambda},\Delta_{\lambda, J};k)$. Thus
$$
\beta^{R}_{i, \lambda}(k) = \dim _{k} \widetilde{H}_{i-2}(\Delta_{\lambda}, \Delta _{\lambda, J};k),
$$
for all $i>0$ and $\lambda \in H$, as claimed.

The remaining characterization is obvious.
\end{proof}
\section{The quadratic condition}
\label{Koszul_conj}
We want to obtain necessary and sufficient conditions on $\mathcal M$ so that $R=k[\mathcal M]$ a Koszul algebra. Theorem 
\ref{Betti_numbers_Koszulness} is not easy to use, even for Stanley-Reisner ring, see \cite{HRW}. Instead of looking directly at 
relative homology of simplicial complexes, we turn our attention to the defining ideal of $k[\Mcc]$ and the monoid rings 
$k[M_C], C\in \Sigma$.

Firstly, we will relate the Koszul property of $R$ with the Koszul property of the involved affine monoid rings $k[M_C]$ where
 $C\in \Sigma$. As a corollary to Proposition \ref{Koszulness_under_retracts} we get:
\begin{prop}
\label{heredity}
If $k[\mathcal M]$ is Koszul then for any $C\in \Sigma$, the monoid ring $k[M_C]$ is Koszul.
\end{prop}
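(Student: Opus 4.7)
The plan is to deduce this statement directly from Proposition \ref{Koszulness_under_retracts}, by exhibiting $k[M_C]$ as a graded algebra retract of $k[\mathcal{M}]$. In fact all the ingredients have already been set up in Section \ref{Toricfacering}: for each cone $C\in \Sigma$ there is a natural inclusion $k[M_C]\hookrightarrow k[\mathcal{M}]$ (since $M_C\subseteq \lvert\Mcc\rvert$) and a natural surjection $k[\Mcc]\to k[M_C]$ sending $t^a$ to $t^a$ if $a\in M_C$ and to $0$ otherwise, and the composition is the identity on $k[M_C]$. So $k[M_C]$ is an algebra retract of $k[\Mcc]$.

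Next I would note that under the hypothesis that $k[\Mcc]$ is Koszul, the ring is homogeneous (Koszulness is only defined for a standard graded $k$-algebra), so $\Mcc$ admits a homogeneous system of generators $a_1,\dots,a_n$. Then $\{a_i:a_i\in M_C\}$ is a degree-$1$ minimal generating set of $k[M_C]$, so $k[M_C]$ is also standard graded, and both the inclusion and the projection above clearly preserve the $\Z$-grading. Therefore $k[M_C]\hookrightarrow k[\Mcc]$ is a graded algebra retract of homogeneous $k$-algebras.

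Applying Proposition \ref{Koszulness_under_retracts} with $R=k[M_C]$ and $S=k[\Mcc]$, the hypothesis that $S$ is Koszul gives in particular that $R$ is Koszul. This is exactly the desired conclusion.

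There is no real obstacle here; the proof is essentially a one-line application of the cited Proposition together with the retract structure already recorded in Section \ref{Toricfacering}. The only thing worth double-checking is the graded compatibility of the retract, which is immediate from the homogeneity assumption built into the definition of ``homogeneous toric face ring.'' (It is worth observing as an aside that Proposition \ref{Koszulness_under_retracts} yields more than mere Koszulness: $k[M_C]$ must also have a linear resolution as a $k[\Mcc]$-module via the retraction, a strictly stronger conclusion that might be useful elsewhere, but is not needed for the present statement.)
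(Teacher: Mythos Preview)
Your proposal is correct and matches the paper's approach exactly: the paper simply states the proposition as a corollary to Proposition~\ref{Koszulness_under_retracts}, relying on the retract structure $k[M_C]\hookrightarrow k[\Mcc]$ already recorded in Section~\ref{Toricfacering}. Your added remarks about graded compatibility and the extra linear-resolution conclusion are accurate but not needed for the argument.
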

Now we come to an useful condition for the Koszul property of toric face rings.
\begin{defn}(Quadratic condition)
Given a homogeneous system of generators $a_1,\ldots,a_n$, $\Mcc$ is said to {\em satisfies the quadratic condition} if the monomial ideal $A_{\Mcc}$ is generated in degree $2$. In that case, we also say that $k[\Mcc]$ satisfies the quadratic condition.
\end{defn}
We immediately have the following interpretation of the quadratic condition.
\newpage
\begin{prop}
\label{interpretation_quadratic_condition}
The following statements are equivalent:
\begin{enumerate}
 \item The monoidal complex $\Mcc$ satisfies the quadratic condition w.r.t.~ the homogeneous system of generators $a_1,\ldots,a_n$.
 \item For any set $I\subseteq \{1,2,
\ldots,n\}$ such that the elements $\{a_i:i\in I\}$ do not belong to a common cone of $\Sigma$, there are two different indices $i,j\in I$ such that 
$a_i$ and $a_j$ do not belong to a common cone of $\Sigma$.
\end{enumerate}
\end{prop}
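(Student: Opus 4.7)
The plan is to recognize that $A_{\Mcc}$ is nothing but the Stanley--Reisner ideal of the simplicial complex $\Delta_{\Mcc}$, and then to apply the standard ``flag complex'' characterization of quadratic Stanley--Reisner ideals. Recall from Proposition \ref{def-ideal} that $A_{\Mcc}$ is minimally generated by the squarefree monomials $\prod_{j\in G}X_j$ where $G$ runs over the minimal non-faces of $\Delta_{\Mcc}$. Thus the quadratic condition on $\Mcc$ is equivalent to the statement that every minimal non-face of $\Delta_{\Mcc}$ has cardinality two. Note in passing that each singleton $\{i\}$ is a face of $\Delta_{\Mcc}$: since $a_i$ is an element of the homogeneous system of generators, it lies in some $M_C$, so $\{a_i\}$ is contained in a common cone. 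In particular no non-face has cardinality one.

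For (i)$\Rightarrow$(ii), I assume the quadratic condition and take $I\subseteq[n]$ such that $\{a_i:i\in I\}$ is not contained in a common cone of $\Sigma$. Then $I\notin\Delta_{\Mcc}$, so $I$ contains some minimal non-face $G$. By the quadratic condition $|G|=2$, so we can write $G=\{i,j\}\subseteq I$ with $i\neq j$, and by definition of $\Delta_{\Mcc}$ the two elements $a_i,a_j$ do not lie in a common cone of $\Sigma$, giving (ii).

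For (ii)$\Rightarrow$(i), I take any minimal non-face $G$ of $\Delta_{\Mcc}$. By definition the elements $\{a_i:i\in G\}$ are not contained in any common cone, so (ii) furnishes two distinct indices $i,j\in G$ such that $a_i$ and $a_j$ do not lie in a common cone, meaning $\{i,j\}\notin\Delta_{\Mcc}$. Minimality of $G$ forces $G=\{i,j\}$, hence $|G|=2$. Therefore $A_{\Mcc}$ is generated by squarefree quadratic monomials, which is the quadratic condition.

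Both implications are essentially bookkeeping once the translation to $\Delta_{\Mcc}$ is in place; the only delicate point is observing that singletons cannot be non-faces, which guarantees the minimal non-faces produced in (i)$\Rightarrow$(ii) consist of two genuinely distinct indices. I do not foresee any substantial obstacle beyond making this dictionary explicit.
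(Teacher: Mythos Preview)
Your proof is correct and is exactly the unpacking the paper has in mind: the paper states this proposition without proof, prefacing it only with ``We immediately have the following interpretation of the quadratic condition.'' Your identification of $A_{\Mcc}$ with the Stanley--Reisner ideal of $\Delta_{\Mcc}$ and the standard flag-complex argument is precisely that immediate verification.
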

Clearly a monoidal complex which gives rise to a Stanley-Reisner rings defined by quadrics satisfies the quadratic condition. Moreover, the correspongding Stanley-Reisner ring 
is Koszul by Fr\"oberg's theorem. 
\begin{rem}
Without the quadratic condition, even when the ring $k[M_C]$ is a Koszul 
algebra for each facet $C\in \Sigma$ and the defining ideal $I$ is generated by quadrics, $k[\mathcal M]$ can be non-Koszul. 
See the example below.
\end{rem}
\begin{ex}
\label{quadratic_does_not_imply_Koszul}
Take $k = \Q$. Consider the points in $\R^4$ with the following coordinates
\begin{gather*}
A_1 = (2,0,0,0),\ A_2=(0,2,0,0),\ A_3 = (0,0,2,0),\\
 A_4 = (1,1,0,0),\ A_5 = (0,1,1,0).
\end{gather*}
Hence these points live in the affine subspace $\R^3=\{x_4=0\}$. The monoid ring generated by those 5 points is
 $k[X_1,\ldots,X_5]/I_1$ where $I_1 = (X_1X_2-X_4^2,X_2X_3-X_5^2)$. Let $O =(0,0,0,0)$ be the origin of $\R^4$.

Take the point $A_6= (0,0,0,2)$. Consider the fan in $\R^4$ with the following facets, which are simplicial 
cones: $\normalfont {OA_1A_2A_3}$, $\normalfont{OA_1A_3A_6}, \normalfont{OA_2A_6}$. 

\bigskip
\bigskip

\setlength{\unitlength}{4cm}
\begin{picture}(1,1)
\thicklines
\put(1.5,0){\line(0,1){1}}
\put(1.5,0){\line(1,0){1}}
\put(1.5,0){\line(1,1){0.85}}
\put(1.5,0){\line(-2,-1){0.85}}
\put(1.5,-0.09){$O$}
\put(1.55,0.85){$A_1$}
\put(1.5,0.8){\circle*{0.04}} 
\put(2.35,0.05){$A_3$}
\put(2.3,0){\circle*{0.04}}
\put(2.15,0.58){$A_2$}
\put(2.1,0.6){\circle*{0.04}}
\put(1.85,0.75){$A_4$}
\put(1.8,0.7){\circle*{0.04}}
\put(2.25,0.35){$A_5$}
\put(2.2,0.3){\circle*{0.04}}
\put(0.95,-0.35){$A_6$}
\put(0.9,-0.3){\circle*{0.04}}

\put(1.5,0.8){\line(3,-1){.6}}
\put(2.3,0){\line(-1,3){.2}}
\multiput(1.5,0.8)(0.03,-0.03){27}{\circle*{0.01}}
\end{picture}

\bigskip
\bigskip
\bigskip
\bigskip
\bigskip
The (homogeneous) toric face ring we get is $R = k[X_1,\ldots,X_6]/I$ with $I = I_1 + (X_4X_6,X_5X_6)$.

The affine monoid rings supported on the $3$ maximal cones are Koszul. In fact, in the revlex order with $X_1<X_2<\cdots<X_5$, the polynomials
$\{X_1X_2-X_4^2,X_2X_3-X_5^2\}$ form a Gr\"obner basis for $I_1$. So $k[X_1,\ldots,X_5]/I_1$ is Koszul. The other two monoid rings are 
polynomial rings.

However, we can check by Macaulay2 \cite{GS} that $R$ is not a Koszul algebra, because $k$ has a non-linear second syzygy. The Betti table 
we get is the following.

\bigskip
\centering{
\begin{tabular}{r l l l l l l l l}
& 0 & 1 & 2 & 3 & 4 & 5 & 6 & 7 \\
total: & 1 & 6 & 19 & 46 & 101 & 217 & 468 & 1016 \\
0: & 1 & 6 & 19 & 45 & 92 & 173 & 309 & 534 \\
1: & . & . & . & 1 & 9 & 44 & 158 & 470\\
2: & . & . & . & . & . & . & 1 & 12\\
\end{tabular}
}
\bigskip

In this example, $A_{\Mcc}=(X_4X_6,X_5X_6,X_1X_2X_6,X_2X_3X_6)$ is not quadratic, so $\Mcc$ does not satisfy the quadratic condition w.r.t.~ the generators $A_1,\ldots,A_6$.
\end{ex}
The following conjecture was suggested by T. R\"{o}mer.
\begin{conj}[R\"{o}mer]
\label{main_conj}
Assume that $\Mcc$ satisfies the quadratic condition (with respect to a homogeneous system of generators), and for every cone $C\in \Sigma, k[M_C]$ is a Koszul algebra. Then
$k[\mathcal M]$ is Koszul.
\end{conj}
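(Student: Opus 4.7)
My plan is to attempt an induction on the number of facets of $\Sigma$, using a fiber-product decomposition of $k[\Mcc]$. Let $C_1,\ldots,C_r$ be the facets; when $r=1$ the conclusion is precisely the hypothesis. For $r\ge 2$, let $\Sigma_1$ be the face poset of $C_1$, let $\Sigma'=\Sigma\setminus\{C_1\}$ (which is a subfan since $C_1$ is maximal), and let $\Sigma_0=\Sigma_1\cap\Sigma'$. Denote by $\Mcc_1,\Mcc',\Mcc_0$ the induced monoidal subcomplexes. One has a natural pullback presentation
\[
k[\Mcc]\;\cong\;k[\Mcc_1]\times_{k[\Mcc_0]}k[\Mcc'].
\]
Both the quadratic condition and the Koszulness hypothesis on the subrings $k[M_C]$ descend to each of these subcomplexes, so by the inductive hypothesis $k[\Mcc']$ and $k[\Mcc_0]$ are Koszul, while $k[\Mcc_1]=k[M_{C_1}]$ is Koszul by assumption.

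The crux is to pass from Koszulness of the three corners of the pullback to Koszulness of $k[\Mcc]$. Since $\Mcc_1$ is a restricted subcomplex of $\Mcc$, Proposition \ref{monoidal_complexes_retracts} identifies $k[\Mcc_1]$ as an algebra retract of $k[\Mcc]$, so by Proposition \ref{Koszulness_under_retracts} it suffices to show that $k[\Mcc_1]$ has a linear resolution as a $k[\Mcc]$-module. The first step of this resolution is generated by the variables $a_j$ whose supports lie strictly outside $C_1$, and the first syzygies come from the squarefree monomials of $A_{\Mcc}$ that involve such an $a_j$ together with a generator of $k[M_{C_1}]$. The quadratic condition forces these crossing monomials to sit in degree two, which delivers linearity at the first homological step for free.

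The main obstacle is propagating linearity to higher homological degrees. One promising route is to filter the bar resolution of Theorem \ref{Betti_numbers_Koszulness} by the smallest subfan of $\Sigma$ supporting the terms of a given chain and run a Mayer--Vietoris spectral sequence whose $E_1$-page is controlled, via the inductive hypothesis, by Koszulness of the rings associated to each face of $\Sigma$ and of their pairwise and higher intersections. A second route is to build an explicit Koszul filtration on $k[\Mcc]$ consisting of ideals generated by subsets of $\{a_1,\ldots,a_n\}$ indexed by restricted subcomplexes of $\Mcc$; the quadratic condition should provide the combinatorial control needed to check the colon-closure axiom of Definition \ref{K_filtration}. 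Both routes require a simultaneous handling of the pure binomials inside each $I_{C_i}$ and the quadratic monomial relations that glue the different $M_{C_i}$ together, and this interaction is where I expect the combinatorial heart of the problem to lie.
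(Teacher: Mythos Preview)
The statement you are trying to prove is labeled \textbf{Conjecture} in the paper, and the paper does \emph{not} prove it. The author explicitly presents it as an open problem of R\"omer and only offers partial evidence (Propositions \ref{quadGr}, Theorem \ref{strgKoszul}, Theorem \ref{i-K}). So there is no ``paper's own proof'' to compare against; any complete argument here would be a new result.

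Your proposal is openly incomplete---you yourself identify the ``main obstacle'' and then sketch two vague ``promising routes'' without carrying either out. Beyond this, there is a concrete error in the inductive setup. You assert that the quadratic condition descends to $\Mcc'$, the monoidal complex on $\Sigma'=\Sigma\setminus\{C_1\}$. This is false in general. Take $\Sigma$ to be the face poset of the single cone $C_1=\R_{+}^3$ with $M_{C_1}=\N^3$ and generators $e_1,e_2,e_3$. Then $\Mcc$ trivially satisfies the quadratic condition ($A_{\Mcc}=0$). In $\Sigma'$ the three coordinate $2$-faces survive, so $e_ie_j\neq 0$ in $k[\Mcc']$ for all $i\neq j$, yet $e_1e_2e_3=0$ because no cone of $\Sigma'$ contains all three. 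Hence $A_{\Mcc'}=(X_1X_2X_3)$ is cubic, and your inductive hypothesis is unavailable for $\Mcc'$. The underlying reason is that $\Mcc'$ is typically \emph{not} a restricted subcomplex of $\Mcc$ in the sense of Proposition \ref{monoidal_complexes_retracts}, so Proposition 5.7(v) does not apply to it; the boundary of a facet is rarely a retract.

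Even setting this aside, the reduction via Proposition \ref{Koszulness_under_retracts} replaces the problem ``$k[\Mcc]$ is Koszul'' by ``$k[M_{C_1}]$ has a linear $k[\Mcc]$-resolution,'' which is not visibly easier: the higher syzygies of $k[M_{C_1}]$ over $k[\Mcc]$ involve exactly the same interaction between binomial relations inside the facets and the quadratic monomials gluing them that makes the conjecture hard in the first place. Your Mayer--Vietoris and Koszul-filtration sketches do not address this; in particular, a Koszul filtration by ideals generated by subsets of $\{a_1,\ldots,a_n\}$ would essentially force each $k[M_C]$ to be strongly Koszul, which is strictly stronger than the hypothesis of the conjecture.
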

The quadratic condition behaves well under various constructions on toric face rings.
\begin{prop}
Let $k[\Mcc], k[\Ncc]$ be monoidal complexes satisfying the quadratic condition. Then the following toric face rings also satisfy the quadratic condition with respect 
to suitable system of generators:
\begin{enumerate}
 \item tensor product over $k$ of $k[\Mcc]$ and $k[\Ncc]$,
 \item fiber product over $k$ of $k[\Mcc]$ and $k[\Ncc]$,
 \item Segre product over $k$ of $k[\Mcc]$ and $k[\Ncc]$,
 \item Veronese subrings of $k[\Mcc]$,
 \item multigraded algebra retracts of $k[\Mcc]$.
\end{enumerate}
\end{prop}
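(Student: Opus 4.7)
The plan is to reformulate the quadratic condition combinatorially and then check it survives each construction. By Proposition \ref{def-ideal}, $A_\Mcc$ is generated by the squarefree monomials $\prod_{j\in G}X_j$ with $G\notin\Delta_\Mcc$, so the quadratic condition amounts to $\Delta_\Mcc$ being a flag simplicial complex: every non-face already contains a non-edge. Thus (i)--(v) reduce to verifying that the relevant operations on simplicial complexes preserve flagness.

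For (i)--(iii), I choose the natural systems of generators arising from the constructions in Propositions \ref{tensor_product_of_toric_face_rings}, \ref{fiber_product_of_toric_face_rings} and \ref{Segre_toric_face_rings}, and identify $\Delta$ of the new ring. For the tensor product $J(\Mcc,\Ncc)$, the cones of $J(\Sigma,\Gamma)$ are precisely the joins $J(C,D)$, so the resulting complex is the simplicial join $\Delta_\Mcc * \Delta_\Ncc$. For the fiber product, the transversality $\R^d\cap\R^e=\{0\}$ forbids any cone of $\Sigma\cup\Gamma$ from containing both some $a_i$ and some $b_j$, so the resulting complex is the disjoint union $\Delta_\Mcc\sqcup\Delta_\Ncc$ on disjoint vertex sets. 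For the Segre product the generators are the pairs $(a_i,b_j)$, and a set of these pairs lies in a common cone $C*D$ iff its first coordinates form a face of $\Delta_\Mcc$ and its second coordinates form a face of $\Delta_\Ncc$; so the complex is the coordinatewise (categorical) product. In each case flagness is preserved by a short case analysis: a non-face is handled either by applying flagness on the side where the projection/restriction fails (join and product cases), or by exhibiting any cross pair as a non-edge (disjoint-union case).

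For (v), Proposition \ref{monoidal_complexes_retracts} identifies the retract with $k[\Mcc_\Gamma]$ for a restricted subcomplex $\Mcc_\Gamma$, and I use the inherited system $\{a_i:a_i\in|\Mcc_\Gamma|\}$, which is easily checked to be a homogeneous system of generators of $\Mcc_\Gamma$. The defining property of restrictedness says that for finite subsets of $|\Mcc_\Gamma|$, lying in a common cone of $\Sigma$ is equivalent to lying in a common cone of $\Gamma$; hence $\Delta_{\Mcc_\Gamma}$ is the vertex-induced subcomplex of $\Delta_\Mcc$ on the corresponding indices, and flagness trivially passes to induced subcomplexes.

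The main obstacle is (iv), because a homogeneous system of generators $b_1,\ldots,b_N$ of the Veronese $\Mcc^{(m)}$ consists of elements of $|\Mcc|$ that need not belong to $\{a_1,\ldots,a_n\}$. The bridging step is a lemma: letting $D_i$ be the minimal cone in $\Sigma$ containing $b_i$ and $A_i=\{a_k:a_k\in M_{D_i}\}$, one has
\[
\{b_{i_1},\ldots,b_{i_s}\}\ \text{lies in a common cone of }\Sigma\quad\Longleftrightarrow\quad \bigcup_{j=1}^{s} A_{i_j}\in\Delta_\Mcc,
\]
which follows from the compatibility $M_D = M_C\cap D$ for $D\subseteq C$ together with $D_i=\R_+A_i$. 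Granted this, if $\{b_{i_j}\}$ is not a face of $\Delta_{\Mcc^{(m)}}$, then $\bigcup_j A_{i_j}\notin\Delta_\Mcc$; by flagness of $\Delta_\Mcc$ there exist $a\in A_{i_p}$ and $a'\in A_{i_q}$ sharing no common cone. One must have $p\ne q$ (else $a,a'\in M_{D_{i_p}}$ share $D_{i_p}$), and then a second application of the equivalence forces $b_{i_p},b_{i_q}$ to share no common cone, supplying the required non-edge in $\Delta_{\Mcc^{(m)}}$.
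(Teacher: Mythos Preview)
Your proposal is correct. The framing via flagness of $\Delta_\Mcc$ is exactly the content of Proposition~\ref{interpretation_quadratic_condition}, and for parts (i), (ii), (iii), (v) your arguments are the simplicial-complex translations of the paper's ideal-theoretic ones (for (i)--(ii) the paper writes down $A_{\Mcc}+A_{\Ncc}$ and $A_{\Mcc}+A_{\Ncc}+(X_iY_j)$ directly; for (iii) it argues as you do that a vanishing product of pairs forces a vanishing product in one coordinate; for (v) it writes $A_{\Mcc_\Gamma}=A_\Mcc\cap k[X_i:a_i\in|\Mcc_\Gamma|]$, which is your induced-subcomplex statement).

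The genuine difference is (iv). The paper fixes the specific generating set of all degree-$m$ monomials $\sum t_ia_i$ and argues by induction on the number of \emph{non-extremal} elements (those not of the form $ma_j$): replacing a non-extremal $b_1\in\relint C$ by the extremal elements $ma_1,\ldots,ma_s$ supported on $C$ reduces the count, and then one tracks which pair vanishes. Your route is cleaner: the lemma identifying ``$\{b_{i_j}\}$ lies in a common cone'' with ``$\bigcup_j A_{i_j}\in\Delta_\Mcc$'' lets you invoke flagness of $\Delta_\Mcc$ once and read off the non-edge directly, with no induction and no need to single out a specific generating set. What the paper's argument buys is concreteness (it never leaves the level of products in $k[\Mcc]$); what yours buys is uniformity (it works verbatim for any homogeneous system of generators of $\Mcc^{(m)}$) and a transparent reason why flagness is the right invariant.
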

\begin{proof}
From the hypothesis $\Mcc$ and $\Ncc$ satisfy the quadratic condition w.r.t.~ some system of generators. Assume that $a_1,\ldots,a_n$ is that homogeneous system of generators of $\Mcc$ and $b_1,\ldots,b_m$ is that of $\Ncc$. Let $k[X_1,\ldots,X_n]/I_{\Mcc}$ be the presentation of $k[\Mcc]$, $k[Y_1,\ldots,Y_m]/I_{\Ncc}$ the presentation of $k[\Ncc]$. 

(i) The monomial part of the defining ideal of $k[\Mcc]\otimes_k k[\Ncc]$ is $A_{\Mcc}+ A_{\Ncc}$, which is quadratic.

(ii) The monomial part of the defining ideal of $k[\Mcc \cup \Ncc]$ is $A_{\Mcc}+ A_{\Ncc}+ (X_iY_j:1\le i \le n, 1\le j\le m)$, which is also quadratic.

(iii) Recall from the proof of Proposition \ref{Segre_toric_face_rings} that $k[\Mcc]*k[\Ncc]=k[\Mcc*\Ncc]$. An element in the support of $\Mcc *\Ncc$ has the form $(a,b)$ where $a,b$ belongs to the support of $\Mcc$ and $\Ncc$ respectively, and $\deg (a)=\deg(b).$ Now assume that $(a_1,b_1),\ldots,(a_m,b_m)$ is a sequence of elements of $|\Mcc *\Ncc|$ such that $(a_1,b_1) \cdots (a_m,b_m)=0$. This implies that either $a_1\cdots a_m=0$ or $b_1\cdots b_m=0$. We may assume that the first equality holds. Since $\Mcc$ satisfies the quadratic condition, there exists some $i\neq j$ such that $a_ia_j=0$. But then 
$(a_i,b_i)\cdot (a_j,b_j)=0$. Hence $\Mcc*\Ncc$ also satisfies the quadratic condition.

(iv) Let $\Mcc^{(m)}$ be the $m$th Veronese of $\Mcc$. We show that $\Mcc^{(m)}$ satisfies 
the quadratic condition w.r.t. \{$\sum_{i=1}^n t_ia_i$: $\sum_{i=1}^n t_i=m, t_i\ge 0$\}. In fact, we will show that if $b_1\cdots b_p=0$ where $b_i\in |\Mcc^{(m)}|$ then $b_ib_j=0$ for some $i\neq j$.

We call $b_i$ extremal element if it is $b_i=ma_j$ for some $j$. We prove by induction on the number of non-extremal elements among $b_1,\ldots,b_p$.

If there is no non-extremal element, we are done by hypothesis on $\Mcc$.

Assume that $b_1$ is a non-extremal. Choose a cone $C\in \Sigma$ such that $b_1\in \relint C$. We can write $b_1=\sum_{i=1}^s t_ia_i$ where 
$\sum_{i=1}^s t_i=m, t_i> 0$ and $a_1,\ldots,a_s\in C$.

Clearly $(ma_1)\cdots (ma_s) b_2\cdots b_p=0$, hence by induction hypothesis on the number of non-extremal elements, there exist $2\le i<j\le p$ such that $b_ib_j=0$ (in 
this case we are done) or $ma_t b_i=0$ for $1\le t \le s$ and $2\le i \le p$. In the second case $b_i$ does not belong to 
any facet containing $C$, hence $b_1b_i=0$, hence we are also done.

(v)  From Proposition \ref{monoidal_complexes_retracts}, we know that any multigraded retract of $k[\Mcc]$ is isomorphic to some $k[\Mcc_{\Gamma}]$, where $\Mcc_{\Gamma}$ is a restricted subcomplex of $\Mcc$. It is not hard to check that the monomial part of the defining ideal of $k[\Mcc_{\Gamma}]$ is
\[
A_{\Mcc_{\Gamma}}= A_{\Mcc} \cap k[X_i: a_i \in |\Mcc_{\Gamma}|].
\]
Since $A_{\Mcc}$ is quadratic in $S=k[X_i:a_i \in |\Mcc|]$, we have $A_{\Mcc_{\Gamma}}$ is quadratic in 
$k[X_i: a_i \in |\Mcc_{\Gamma}|]$.
\end{proof}
These observations suggest the feasibility of the above conjecture.

We will provide in the following some evidences to support this conjecture, see \ref{quadGr}, \ref{strgKoszul},
 \ref{rmstrgKoszul} and \ref{i-K}. As a special case, if $k[\Mcc]$ is a Stanley-Reisner ring, this conjecture
 is confirmed by Fr\"oberg's theorem. In the case of affine monoid rings (where $\Sigma$ is the face poset of a cone),
 there is nothing to do.
\begin{rem}
On the other hand, it is not true that the ideal $A_{\mathcal M}$ is generated by quadratic monomials given $R$ being Koszul. See the following
 example \ref{non_monomial_quadratic}.
\end{rem}
\begin{ex}
\label{non_monomial_quadratic}
Continue with Example \ref{nontrivialtfr}. The toric face ring 
$$
k[\mathcal M]=k[X_1,X_2,X_3,X_4]/(X_1X_2-X_4^2, X_3X_4)
$$ 
is Koszul. Indeed, in the lex order, the set $\{X_1X_2-X_4^2, X_3X_4\}$ is a quadratic Gr\"obner basis for $I$.
However, the ideal $A_{\mathcal M}=(X_3X_4,X_1X_2X_3)$ is not generated by quadrics.
\end{ex}
The next result is similar to Bruns, Koch and R\"omer \cite[Prop.~3.2]{BKR}. We give a proof for the convenience of the 
reader.
\begin{prop}
\label{quadGr}
Let $<$ be a monomial order on $S=k[X_1,\ldots,X_n]$ and $<_i$ the induced order on $S_{C_i}=k[X_j|a_j\in M_{C_i}]$, where 
$C_1,\ldots,C_r$ are the facets of $\Sigma$. Then:
\begin{enumerate}
\item 
Assume that $k[\Mcc]$ is $G$-quadratic w.r.t.~ a monomial order $<$ on $S$. Then for $i=1,\ldots,r$, $k[M_{C_i}]$ is $G$-quadratic w.r.t.~ $<_i$.
\item
Assume that $\Mcc$ satisfies the quadratic condition. Additionally, assume that $<$ is a monomial order on $S$ such that 
the ideal $k[M_{C_i}]$ is $G$-quadratic w.r.t.~ $<_i$ for $i=1,\ldots,r$. Then 
$k[\Mcc]$ is $G$-quadratic w.r.t.~ $<$.
\end{enumerate}
\end{prop}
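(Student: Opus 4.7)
My plan is to reduce both parts to a single transfer identity: for every facet $C_i$ of $\Sigma$,
\[
\ini_<(I) \cap S_{C_i} = \ini_{<_i}(I_{C_i}).
\]
From this, part (i) is immediate, since the contraction of any quadratic monomial ideal of $S$ to the polynomial subring $S_{C_i}$ is generated by the subset of the original quadratic monomial generators that happen to lie in $S_{C_i}$; thus quadraticity of $\ini_<(I)$ passes to $\ini_{<_i}(I_{C_i})$. For part (ii), I will combine the transfer identity with the quadratic condition to establish
\[
\ini_<(I) = A_\Mcc + \sum_{j=1}^r \ini_{<_j}(I_{C_j}) \cdot S,
\]
whose right-hand side is generated by quadrics under the hypotheses.

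To prove the transfer identity, I first observe that $I \cap S_{C_i} = I_{C_i}$, since any polynomial in $S_{C_i}$ vanishing in $k[\Mcc]$ must vanish in the algebra retract $k[M_{C_i}]$. The main tool is the ring retraction $\phi_i \colon S \to S_{C_i}$ sending $X_j$ to $X_j$ when $a_j \in M_{C_i}$ and to $0$ otherwise. Using the description $I = A_\Mcc + \sum_j S \cdot I_{C_j}$ from Proposition \ref{def-ideal}, I will verify that $\phi_i(I) \subseteq I_{C_i}$: the squarefree monomial generators of $A_\Mcc$ are killed because no minimal non-face of $\Delta_\Mcc$ can lie inside $M_{C_i}$, and a pure binomial $X^\alpha - X^\beta \in I_{C_j}$ is either killed by $\phi_i$ or has all its variables in $S_{C_i}$, in which case $\alpha,\beta \in M_{C_i} \cap M_{C_j} = M_{C_i \cap C_j}$ and thus $X^\alpha - X^\beta \in I_{C_i \cap C_j} \subseteq I_{C_i}$. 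The inclusion $\ini_{<_i}(I_{C_i}) \subseteq \ini_<(I) \cap S_{C_i}$ is formal. For the reverse, any monomial $m \in \ini_<(I) \cap S_{C_i}$ is an $S_{C_i}$-multiple of $\ini_<(f)$ for some $f \in I$ with $\ini_<(f) \in S_{C_i}$, and the key point is that $\ini_{<_i}(\phi_i(f)) = \ini_<(f)$: indeed $\phi_i$ fixes $\ini_<(f)$ while merely killing or preserving each of the strictly smaller terms, so it cannot manufacture a new leading term. Since $\phi_i(f) \in I_{C_i}$, this places $m$ in $\ini_{<_i}(I_{C_i})$.

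With the transfer identity in hand, part (ii) only requires justifying the decomposition of $\ini_<(I)$. The inclusion $\supseteq$ is clear. For $\subseteq$, take any monomial $m = \ini_<(f)$ with $f \in I$. If $\operatorname{supp}(m)$ is not a face of $\Delta_\Mcc$, then the quadratic condition, via Proposition \ref{interpretation_quadratic_condition}, yields two indices in $\operatorname{supp}(m)$ whose corresponding generators share no cone, producing a quadratic generator of $A_\Mcc$ that divides $m$; otherwise $\operatorname{supp}(m) \subseteq M_{C_j}$ for some facet $C_j$, so $m \in S_{C_j}$, and the transfer identity gives $m \in \ini_{<_j}(I_{C_j}) \cdot S$. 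The main technical obstacle will be verifying $\phi_i(I) \subseteq I_{C_i}$, which rests crucially on both the explicit presentation from Proposition \ref{def-ideal} and the compatibility $M_{C_i} \cap M_{C_j} = M_{C_i \cap C_j}$ of the monoidal complex; once this is established, the rest of the argument is bookkeeping.
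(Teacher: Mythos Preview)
Your approach is correct and takes a genuinely different route from the paper. The paper establishes the same two identities, but by invoking Lemma~\ref{Groebner_basis_of_binomial}: since $I$ is generated by monomials and binomials, Buchberger's algorithm yields a Gr\"obner basis of $I$ consisting only of monomials and binomials; one then checks that every monomial in $I$ lies in $A_{\Mcc}$ and every proper binomial in $I$ either has both terms in $A_{\Mcc}$ or is a pure binomial in some $I_{C_i}$. Your argument bypasses the Buchberger lemma entirely, using instead the ring retraction $\phi_i\colon S\to S_{C_i}$ to transport initial terms directly. This is more elementary and makes explicit the algebra-retract structure that underlies the whole paper; the paper's route has the minor advantage of actually exhibiting a Gr\"obner basis of $I$ along the way.

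One point in your plan requires more than you have written. The claimed dichotomy ``$\phi_i$ either kills a pure binomial $X^\alpha-X^\beta\in I_{C_j}$ or leaves all its variables in $S_{C_i}$'' does not follow from $M_{C_i}\cap M_{C_j}=M_{C_i\cap C_j}$ alone: you must rule out the mixed case where $X^\alpha\in S_{C_i}$ but $X^\beta\notin S_{C_i}$, which would leave the single monomial $X^\alpha$ and land you outside the prime ideal $I_{C_i}$. The missing ingredient is the face property of pointed cones: if $X^\alpha\in S_{C_i}$ then the common value $v=\sum_\ell \alpha_\ell a_\ell=\sum_\ell \beta_\ell a_\ell$ lies in the face $C_i\cap C_j$ of $C_j$, and since $v$ is a non-negative combination of elements $a_\ell\in C_j$ (those with $\beta_\ell>0$), each such $a_\ell$ must already lie in that face, hence in $M_{C_i}$. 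With this observation your verification of $\phi_i(I)\subseteq I_{C_i}$ goes through, and the rest of the argument is sound.
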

We will use the following simple lemma.
\begin{lem}
\label{Groebner_basis_of_binomial}
Let $I$ be an ideal of $S$ which is generated by monomials and binomials and $<$ a monomial order on $S$. Then with respect to 
$<$, the ideal $I$ has a Gr\"obner basis consisting of monomials and binomials. 
\end{lem}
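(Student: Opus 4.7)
The approach is to run Buchberger's algorithm on the given generating set of monomials and binomials and verify that the ``monomial-or-binomial'' property is preserved by both the formation of $S$-polynomials and the reduction steps.

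First I would observe the closure properties. If $f, g$ are each either a monomial or a binomial, then their $S$-polynomial $S(f,g) = \frac{\mathrm{lcm}(\ini_<(f), \ini_<(g))}{\ini_<(f)} f - \frac{\mathrm{lcm}(\ini_<(f), \ini_<(g))}{\ini_<(g)} g$ has at most two terms; the leading terms cancel by construction, leaving at most one contribution from each of $f$ and $g$. So $S(f,g)$ is a monomial or a binomial (possibly zero). Next, the reduction of such an element by another monomial or binomial: cancelling the leading term of a monomial by any rule yields zero, while cancelling the leading term of a binomial $aX^u + bX^v$ by a monomial rule $cX^p$ (with $X^p \mid X^u$) yields the monomial $bX^v$, and cancelling it by a binomial rule $cX^p + dX^q$ yields $bX^v - (ad/c)X^{(u-p)+q}$, which is again a monomial or a binomial. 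In particular, the remainder of any division is a monomial or a binomial.

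Starting from the given generating set $G_0$ of monomials and binomials for $I$, I would then invoke Buchberger's algorithm: at each stage augment the current set $G$ by the normal form (with respect to $G$) of an $S$-polynomial of two elements of $G$ that is not already zero. By the closure properties just noted, every element produced and added is still a monomial or a binomial, so by induction the entire process stays within the class of monomials and binomials. Buchberger's algorithm terminates (since $S$ is Noetherian and the leading term ideal eventually stabilizes), and its output is by definition a Gr\"obner basis of $I$ with respect to $<$. This Gr\"obner basis then consists solely of monomials and binomials, as required.

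The only point requiring a small caveat is what one does if some reduction produces a binomial whose two terms happen to coincide up to a scalar and combine to a single monomial; but this is automatically absorbed into the ``monomial or binomial'' statement. I do not anticipate any serious obstacle: the entire content of the lemma is the stability of the two-term property under the elementary operations of Buchberger's algorithm, which is a routine bookkeeping exercise once the closure properties for $S$-polynomials and reduction steps are recorded.
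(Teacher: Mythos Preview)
Your argument is correct and follows exactly the same route as the paper: run Buchberger's algorithm on the given monomial/binomial generators and observe that $S$-polynomials and reductions preserve the at-most-two-terms property. The paper states this in one sentence without spelling out the closure checks you carry out, so your version is simply a more detailed execution of the same idea.
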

\begin{proof}
We compute a Gr\"obner basis for $I$ starting with the monomials and binomials generating $I$. By using the Buchberger 
algorithm, at most we add new monomials or new binomials to the Gr\"obner basis of $I$. Hence the conclusion of the lemma follows.
\end{proof}
\begin{proof}[Proof of Proposition \ref{quadGr}]
The two statements follow from the formula
\[
\ini_<(I) = A_{\mathcal M} + \sum _{i=1}^rS\cdot \ini_{<_i}(I_{C_i}),
\]
and 
\[
\ini_{<_i}(I_{C_i})= S_{C_i}\cap \ini_<(I),
\]
for each $i=1,\ldots,r$. We will prove these formulae by using Lemma \ref{Groebner_basis_of_binomial}.

For the first formula, clearly the right-hand side is contained in the left-hand side. For the other inclusion,  
note that from Proposition \ref{def-ideal}, $I$ is generated by monomials in  and some binomials. 

Consider a Gr\"obner basis consisting of monomials and binomials of $I$. Clearly the initial forms of the monomials in 
question belong to $A_{\Mcc}$. Assume that $b$ is a proper binomial belongs to the Gr\"obner basis of $I$. Then either both 
monomials of $b$ are in $A_{\Mcc}$, which gives us nothing to do, or $b$ is in $I_{C_i}$ for some $i\in \{1,\ldots,r\}$. In the 
second case, again $\ini (b)$ belongs to the right-hand side of the first formula.

The second formula is proven in the same way.
\end{proof}
Example \ref{nontrivialtfr} shows that the converse of Proposition \ref{quadGr}.(ii) is not true. In fact, in this case the ring 
$k[\mathcal M]$ has a quadratic Gr\"obner basis w.r.t.~ the lex order, but $A_{\mathcal M}$ is not generated by quadrics.
\section{Strongly Koszul property}
\label{StronglyKoszul}
Recall the notion of strongly Koszul algebras.
\begin{defn}[Herzog, Hibi, Restuccia \cite{HHR}]
Let $R$ be a homogeneous $k$-algebra, $\mm=R_+$. Suppose that $a_1,\ldots,a_n \in R_1$ and minimally generate $\mm$. Then $R$ is 
called {\em strongly Koszul with respect to the sequence} $a_1,\ldots,a_n$ if for every $1\le i_1< \cdots < i_j\le n$, the ideal $(a_{i_1},
\ldots,a_{i_{j-1}}):a_{i_j}$ is generated by a subset of $\{a_1,\ldots,a_n\}.$
\end{defn}
The main result of this section characterizes strongly Koszul toric face rings.
\begin{thm}
\label{strgKoszul}
The following statements are equivalent:
\begin{enumerate}
\item 
 $k[\mathcal{M}]$ is strongly Koszul w.r.t.~ the sequence $\{a_1,\ldots, a_n\}$;
\item
\begin{enumerate}
\item 
for each $i = 1,\ldots,n$, we have $0:_{k[\mathcal{M}]}a_i = (a_{i_1},\ldots, a_{i_j})$, for some elements $a_{i_1},\ldots,
 a_{i_j}$ in $\{a_1,\ldots, a_n\}$,
\item
for each facet $C$ of $\Sigma,$ the ring $k[M_C]$ is strongly Koszul w.r.t.~ the sequence $\{a_1,\ldots, a_n\}\cap M_C$.
\end{enumerate}
\end{enumerate}
\end{thm}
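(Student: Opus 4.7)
The plan is to exploit two features of $k[\Mcc]$: its fine $\Z^d$-grading, which reduces colon ideal membership to the multigraded basis $\{t^a : a \in |\Mcc|\}$, and the fact that for each facet $C\in \Sigma$, the ring $k[M_C]$ is an algebra retract of $k[\Mcc]$ via the inclusion $\iota_C:k[M_C]\hookrightarrow k[\Mcc]$ and the projection $\pi_C:k[\Mcc]\to k[M_C]$ sending $t^a$ to $t^a$ when $a\in M_C$ and to $0$ otherwise. Since each $a_i$ is multigraded, every colon ideal of the form $(a_{i_1},\ldots,a_{i_{j-1}}):a_{i_j}$ is $\Z^d$-graded, so it suffices to test membership on multihomogeneous elements, which are scalar multiples of some $t^a$.

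For (i) $\Rightarrow$ (ii), part (a) is immediate by specializing the definition of strongly Koszul to $j=1$. For (b), fix a facet $C$ and an increasing sequence $i_1<\cdots <i_j$ with each $a_{i_k}\in M_C$. Given a multigraded element $f\in (a_{i_1},\ldots,a_{i_{j-1}}):_{k[M_C]} a_{i_j}$, any witness equation $f\cdot a_{i_j}=\sum_{k<j} g_k a_{i_k}$ remains valid in $k[\Mcc]$ under $\iota_C$, so $f$ lies in the analogous colon ideal of $k[\Mcc]$. By (i) this is generated by some $\{a_{l_1},\ldots,a_{l_s}\}\subseteq \{a_1,\ldots,a_n\}$; writing $f=\sum_p h_p a_{l_p}$ and applying $\pi_C$ yields $f=\sum_{a_{l_p}\in M_C}\pi_C(h_p)\,a_{l_p}$, exhibiting $f$ in the ideal generated by $\{a_{l_p}: a_{l_p}\in M_C\}\subseteq \{a_1,\ldots,a_n\}\cap M_C$. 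The same projection trick applied to each $a_{l_p}\in M_C$ shows that the generator itself already lies in the $k[M_C]$-colon, giving the required description.

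For (ii) $\Rightarrow$ (i), fix $1\le i_1<\cdots<i_j\le n$ and a multigraded $f=t^a\in L:=(a_{i_1},\ldots,a_{i_{j-1}}):_{k[\Mcc]} a_{i_j}$. If $t^a\cdot a_{i_j}=0$, then $t^a\in 0:_{k[\Mcc]} a_{i_j}$, which by (a) is generated by some $a_{l_p}$'s that annihilate $a_{i_j}$ and hence trivially lie in $L$. Otherwise $a$ and $a_{i_j}$ share a cone, and we fix a facet $C$ containing both. Applying $\pi_C$ to a witness equation for $t^a\cdot a_{i_j}\in (a_{i_1},\ldots,a_{i_{j-1}})k[\Mcc]$ preserves the left-hand side (since $a+a_{i_j}\in M_C$) and kills summands indexed by $a_{i_k}\notin M_C$, yielding $t^a\cdot a_{i_j}\in (\{a_{i_k}:k<j,\ a_{i_k}\in M_C\})k[M_C]$. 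Hence $t^a\in (\{a_{i_k}:k<j,\ a_{i_k}\in M_C\}):_{k[M_C]} a_{i_j}$, and by (b) this colon is generated by some $\{a_{l_p}\}\subseteq \{a_1,\ldots,a_n\}\cap M_C$. Each such $a_{l_p}$ lifts via $\iota_C$ into $L$, while the relation $t^a=\sum_p h_p a_{l_p}$ valid in $k[M_C]$ continues to hold in $k[\Mcc]$, completing the argument.

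The main obstacle is orchestrating the passage between colon ideals computed inside $k[\Mcc]$ and those inside each $k[M_C]$. Pushing membership forward along $\iota_C$ is free, but pulling it back from $k[\Mcc]$ down to $k[M_C]$ requires $\pi_C$ to preserve the basis element $t^{a+a_{i_j}}$, which is ensured precisely by choosing $C$ to contain both $a$ and $a_{i_j}$, possible exactly when $t^a\cdot a_{i_j}\ne 0$. Hypothesis (a) is needed separately to handle the complementary case, where no facet records the annihilator information.
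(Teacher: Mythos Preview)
Your proof is correct and follows essentially the same approach as the paper: both exploit the $\Z^d$-grading to reduce colon ideal computations to multigraded elements, and both decompose the colon $(a_{i_1},\ldots,a_{i_{j-1}}):a_{i_j}$ in $k[\Mcc]$ into the annihilator $0:a_{i_j}$ plus the contributions from each facet $k[M_C]$. The paper packages this decomposition into a preliminary lemma (asserting in particular that $\II = (0:_R a_i) + \sum_l R\cdot \II_{C_l}$ and $\II\cap k[M_C]=\II_C$), whereas you carry out the same computation element-by-element via the explicit retract maps $\iota_C$ and $\pi_C$; the content is the same.
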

Firstly, we have a simple lemma.
\begin{lem}
\label{colon-ideal}
Let $\II = (a_1,\ldots,a_{i-1}):_{R}a_i$. Let $C_1,\ldots,C_r$ be the facets of $\Sigma.$
For each $C \in \Sigma$, consider the following ideal of $k[M_C]:$
$$
\II _C =\begin{cases}
          (a_j|j<i, a_j\in M_C):_{k[M_C]}a_i & \text{if $a_i \in M_C$};\\
          0 & \text{if $a_i \notin M_C$.}
          \end{cases}
$$
Then:
\begin{enumerate}
\item
For each $C \in \Sigma$, we have $(a_1,\ldots,a_i)\cap k[M_C] = (a_j| j\le i, a_j \in M_C).$
\item
For each $C \in \Sigma$ such that $a_i \in M_C$, we have $\II \cap k[M_C] = \II _C.$
\item 
$\II = (0:_{R}a_i) + \sum _{l=1}^r R\cdot \II _{C_l}.$
\end{enumerate}
\end{lem}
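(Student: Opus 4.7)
The plan is to exploit the $\Z^d$-grading on $R=k[\Mcc]$ (each homogeneous component has $k$-dimension at most $1$) together with the $\Z^d$-graded retractions $\pi_C \colon R \to k[M_C]$ defined in Section \ref{Toricfacering}, which send $t^a$ to itself if $a\in M_C$ and to $0$ otherwise.

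For (i), the inclusion $\supseteq$ is immediate. Conversely, given $f \in (a_1,\ldots,a_i) \cap k[M_C]$, write $f = \sum_{j\le i} g_j a_j$ in $R$ and apply $\pi_C$. Since $\pi_C$ is a ring homomorphism restricting to the identity on $k[M_C]$, one obtains $f = \pi_C(f) = \sum_{j\le i,\, a_j \in M_C} \pi_C(g_j)\, a_j$, which lies in $(a_j \mid j\le i,\, a_j \in M_C)$.

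Part (ii) is a direct consequence of (i). The inclusion $\II_C \subseteq \II \cap k[M_C]$ holds because $\II_C \cdot a_i \subseteq (a_j \mid j<i,\, a_j\in M_C) \subseteq (a_1,\ldots,a_{i-1})$. Conversely, for $f\in \II \cap k[M_C]$, the hypothesis $a_i\in M_C$ forces $fa_i \in k[M_C] \cap (a_1,\ldots,a_{i-1})$, which by (i) equals $(a_j \mid j<i,\, a_j\in M_C)$, so $f\in \II_C$.

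Part (iii) is the heart of the lemma. The inclusion $\supseteq$ follows directly from the definitions combined with (ii). For $\subseteq$, note that $\II = (a_1,\ldots,a_{i-1}) :_R a_i$ is a $\Z^d$-graded ideal (both the ideal and $a_i$ are $\Z^d$-homogeneous), hence generated by its $\Z^d$-homogeneous elements, each of the form $\lambda t^a$ for some $a\in |\Mcc|$ and $\lambda \in k$. If $\lambda t^a \cdot a_i = 0$, then $\lambda t^a \in 0 :_R a_i$. Otherwise $a$ and $a_i$ lie in a common cone of $\Sigma$, which is a face of some facet $C_l$; hence $\lambda t^a \in k[M_{C_l}]$, and by (i) we have $\lambda t^a \cdot a_i \in (a_1,\ldots,a_{i-1}) \cap k[M_{C_l}] = (a_j \mid j<i,\, a_j\in M_{C_l})$, so $\lambda t^a \in \II_{C_l}$. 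The only real obstacle is this case distinction in (iii) — zero annihilation versus being supported on a single facet — and it is precisely the combinatorial multiplication rule of the toric face ring that makes the dichotomy exhaustive.
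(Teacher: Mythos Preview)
Your proof is correct and follows essentially the same approach as the paper's: both exploit the $\Z^d$-grading together with the retractions $\pi_C$, with the paper arguing on homogeneous elements in (i)--(ii) while you apply $\pi_C$ to an arbitrary expression directly. Your treatment of (iii) simply fills in what the paper leaves as an exercise, via the same case split (annihilator versus common facet) that the grading forces.
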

\begin{proof}
This is a standard utilization of the $\Z^d$-grading. For (i), let $y \in (a_1,\ldots,a_i)\cap k[M_C]$ be homogeneous.
 If $a_j$ divides $y$ then because $y\in M_C$, we have $a_j \in M_C$. Thus (i) is proved.

For (ii), we see that $\II _C\subseteq \II \cap k[M_C].$ Let $z \in \II \cap k[M_C]$ be homogeneous. Since $za_i\in (a_1,\ldots,
a_{i-1})$, we have $za_i = wa_j$ where $j<i.$ But $wa_j \in M_{C}$, so  $w, a_j \in M_{C}$. Using the projection from $R$ to $k[M_C]$, we
 get $z\in \II _C$. 

The last part follows from (ii). We leave it as an exercise to the reader.
\end{proof}
\begin{proof}[Proof of Theorem \ref{strgKoszul}]
With Lemma \ref{colon-ideal}.(iii), it is immediate that (ii) $\Rightarrow$ (i).

Assuming that we have (i). Then (ii)(a) is clear.

Consider a facet $C$ of $\Sigma$, and a subsequence of $\{a_1,\ldots, a_n\}\cap M_C$. Without loss 
of generality we can assume this subsequence to be $a_1,\ldots,a_i$. 
By (i), we have the equality $(a_1,\ldots,a_{i-1}):_{k[\mathcal M]}a_i = (a_{i_1},\ldots,a_{i_k})$.

Using Lemma \ref{colon-ideal}(i), the ideal $(a_1,\ldots,a_{i-1}):_{k[M_C]}a_i = (a_{i_l}|a_{i_l}\in M_C).$ 
This implies (ii)(b).
\end{proof}
\begin{rem}
\label{rmstrgKoszul}
Part (a) of (ii) in the Theorem \ref{strgKoszul} is true if the $\Mcc$ satisfies the quadratic condition. However, the converse is not true as the next example demonstrates.
\end{rem}
\begin{ex} 
In $\R^3$ take six points with the following coordinates:
\begin{gather*}
A_1= (2,0,0),\ A_2=(0,2,0),\ A_3 =(0,0,2),\ A_4=(0,1,1),\\
 A_5=(1,0,1),\ A_6 =(1,1,0).
\end{gather*}
Consider the fan in $\R^3$ with three maximal cones $C_1, C_2, C_3$, where $C_1$ is generated by the points
$A_1,A_2,A_6$, the cone $C_2$ is generated by $A_3,A_1,A_5$ and the cone $C_3$ is generated by $A_2,A_3,A_4$ with the monoid 
relations $A_1A_2-A_6^2=A_2A_3-A_4^2=A_3A_1-A_5^2=0.$

Take $M_1$ to be generated by $A_1, A_2, A_6$ and in the same way, we have two other maximal monoids $M_2$ and $M_3$ of a monoidal complex $\Mcc$. 
The presentation ideal of $k[\Mcc]$ is 
\begin{gather*}
 I=(X_1X_2-X_6^2, X_2X_3-X_4^2,X_3X_1-X_5^2,X_1X_4,X_2X_5,X_3X_6,\\
X_4X_5,X_4X_6,X_5X_6).
\end{gather*}

\bigskip 

\setlength{\unitlength}{4cm}
\begin{picture}(1,1)
\thicklines
\put(1.5,0){\line(0,1){1}}
\put(1.5,0){\line(1,0){1.1}}
\put(1.5,0){\line(1,1){0.8}}
\put(1.45,-0.1){$O$}
\put(1.55,0.85){$A_1$}
\put(1.5,0.8){\circle*{0.04}} 
\put(2.43,0.05){$A_3$}
\put(2.4,0){\circle*{0.04}}
\put(2.15,0.58){$A_2$}
\put(2.1,0.6){\circle*{0.04}}
\put(1.85,0.75){$A_6$}
\put(1.8,0.7){\circle*{0.04}}
\put(2.25,0.35){$A_4$}
\put(2.25,0.3){\circle*{0.04}}
\put(1.99,0.39){$A_5$}
\put(1.95,0.4){\circle*{0.04}}

\put(1.5,0.8){\line(3,-1){.6}}
\put(2.4,0){\line(-1,2){.3}}
\multiput(1.5,0.8)(0.03,-0.027){30}{\circle*{0.01}}\\
\end{picture}

\bigskip
\bigskip
\bigskip

Because of Theorem \ref{strgKoszul}, the ring $k[\mathcal{M}]$ is strongly Koszul w.r.t.~ the sequence $X_1,X_2,\ldots,X_6$.
Indeed, part (b) of (ii) is true. For example, $k[M_1]$ is strongly Koszul w.r.t.~ the sequence $X_1,X_2,X_6$.

Part (a) if (ii) is true because of the following two identities:
\begin{enumerate}
\item 
$0:_{k[\mathcal{M}]}X_1 = (X_4)$
\item
$0:_{k[\mathcal{M}]}X_4= (X_1,X_5,X_6)$
\end{enumerate}
and four similar identities.

However, the monomial part
$$
A_{\Mcc}=(X_1X_2X_3,X_1X_4,X_2X_5,X_3X_6,X_4X_5,X_4X_6,X_5X_6)
$$ 
is not quadratic. 
\end{ex}
\section{Initially Koszul property}
\label{InitiallyKoszul}
In the followings, we consider initially and universally initially Koszul algebras in the sense of Blum \cite[Definition 1.3]{Bl}, 
Conca, Rossi and Valla \cite[Definition 2.2]{CRV}.
\begin{defn}
Let $R$ be a homogeneous $k$-algebra, and $a_1,\ldots,a_n \in R_1$ minimally generate $R_+$. The ring $R$ is called {\em initially Koszul}
(or {\em i-Koszul}) {\em with respect to the sequence} $a_1,\ldots,a_n$ if the set
$$
\mathcal{F} = \{(a_1,\ldots,a_i): i=0,\ldots,n\}
$$
is a Koszul filtration for $R$ in the sense of  Definition \ref{K_filtration}. In other words, for $i=1,\ldots,n$, we have
\[
(a_1,\ldots,a_{i-1}):a_i \in \Fc.
\]
We say that $R$ is {\em universally initially Koszul} (or {\em u-i-Koszul}) if $R$ is i-Koszul w.r.t.~ any $k$-basis of $R_1.$
\end{defn}
In this section, for a homogeneous quotient ring $R=k[X_1,\ldots,X_n]/I$, i-Koszulness means i-Koszulness w.r.t.~ the sequence of elements
$\overline{X_1},\ldots,\overline{X_n}.$ By \cite[Thm.~2.4]{CRV} and \cite[Thm.~2.1]{Bl}, i-Koszulness of $R$ implies that $I$ has quadratic Gr\"{o}bner basis in 
certain monomial order.

From \cite[Prop.~2.3]{Bl}, we see that a Stanley-Reisner ring $k[\Delta]$ is i-Koszul if and only if $\Delta$ is the full simplex.

The following theorem is the main result of this section.
\begin{thm}
\label{i-K}
If $k[\mathcal{M}]$ is i-Koszul w.r.t.~ the sequence $a_1,\ldots,a_n$, then $\Sigma$ is the face poset of a cone. In particular, 
$k[\mathcal{M}]$ is an affine monoid ring.
\end{thm}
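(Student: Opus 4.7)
The plan is to prove the stronger claim that every generator $a_i$ lies in $M_C$ for every facet $C$ of $\Sigma$. Once established, $\{a_1,\ldots,a_n\} \subseteq \bigcap_{C \text{ facet}} M_C$; since the $a_j$'s are the minimal generators of each $M_C$, all facets must coincide, so $\Sigma$ is the face poset of a single cone and $k[\Mcc]$ is an affine monoid ring.

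I would first handle the base case. Since $(0):a_1 \in \Fc$, we can write $(0):a_1 = (a_1,\ldots,a_m)$ for some $m \geq 0$. If $m \geq 1$, then $a_1 \in (0):a_1$ forces $a_1^2 = 0$, contradicting reducedness of $k[\Mcc]$. Hence $a_1$ is a non-zerodivisor, and recalling that the minimal primes of $k[\Mcc]$ are precisely the kernels $P_C = \ker(R \to k[M_C])$ over the facets $C$ of $\Sigma$, we deduce $a_1 \in M_C$ for every facet $C$.

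For the inductive step, assume $a_1,\ldots,a_{i-1}$ all lie in every facet, and suppose for contradiction that $a_i$ does not. Partition the facets into the non-empty sets $S_1 = \{C : a_i \in M_C\}$ and $S_2 = \{C : a_i \notin M_C\}$. Picking any $C' \in S_2$, a monoid argument produces a generator $a_j \in M_{C'}$ with $a_j \notin M_C$ for every $C \in S_1$: if every generator of $M_{C'}$ lay in some $C \in S_1$, then by the compatibility $M_C \cap C' = M_{C \cap C'}$ each would sit in the proper face $C \cap C' \subsetneq C'$, so $M_{C'}$ would be trapped in the boundary of $C'$, contradicting $\R_+ M_{C'} = C'$. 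Then $a_j$ and $a_i$ share no cone of $\Sigma$, so $a_j \cdot a_i = 0$ in $R$. If $j < i$, the induction is immediately contradicted, so $j > i$; the i-Koszul condition $(a_1,\ldots,a_{i-1}):a_i = (a_1,\ldots,a_{m_i})$ together with $a_j$ in this colon yields $m_i \geq j > i$, and so $a_i^2 \in (a_1,\ldots,a_{i-1})$.

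It remains to contradict $a_i^2 \in (a_1,\ldots,a_{i-1})$. The $\Z^d$-grading of $R$ converts this to a monoidal equality $2a_i = a_k + b$ with $k < i$, $b \in |\Mcc|$, and $a_k, b$ in a common cone; the degree function gives $|b|=1$, so $b = a_l$ for some $l \neq i$. If $l < i$, then by induction $a_k, a_l \in M_D$, where $D := \bigcap_{C \text{ facet}} C$; hence $2a_i \in M_D \subseteq D$, and since $D$ is a cone we get $a_i \in D$; the compatibility $M_D = M_{C_1} \cap D$ applied to some $C_1 \in S_1$ containing $a_i$ places $a_i \in M_D$, whence $a_i \in M_{C_2}$ for any $C_2 \in S_2$, the desired contradiction. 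The main obstacle is the remaining case $l > i$: multiplying the binomial identity $a_i^2 = a_k a_l$ by $a_j$ and using that $a_k$ is a non-zerodivisor (being in every facet by induction) yields $a_j a_l = 0$, so the obstruction persists at the later index $l$. I expect to close this case by iterating the analysis on $a_l$ in place of $a_i$: this creates a strictly increasing chain of indices $i < l < l' < \cdots$ inside $[n]$ which must terminate, and a careful tracking of whether intermediate indices lie below the smallest bad index ultimately reduces the terminal step to the already contradictory sub-case $l < i$.
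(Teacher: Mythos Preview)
Your argument is correct and self-contained through the case $l<i$, but the case $l>i$ has a genuine gap, and the sketched iteration does not close it.  Two independent obstructions appear when you try to repeat the analysis at index $l$.  First, to force $a_l^2\in (a_1,\ldots,a_{l-1})$ via the i-Koszul filtration you need an annihilator $a_{j'}$ of $a_l$ with $j'>l$; you only produced $a_j$ with $a_j a_l=0$ and $j>i$, and nothing prevents $j<l$.  Second, even granting $2a_l=a_{k'}+a_{l'}$ with $k'<l$, your contradiction in the sub-case ``both indices small'' required both factors to lie below the smallest bad index $i$ (so that they are non-zerodivisors and sit in the common face $\bigcap_C C$); here $k'$ may well satisfy $i\le k'<l$, in which case $a_{k'}$ is itself a zerodivisor and you can neither cancel it from $a_j a_{k'}a_{l'}=0$ nor place it in $M_D$.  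So the promised ``strictly increasing chain'' need not exist, and when it does the terminal step need not reduce to the good sub-case.

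The paper sidesteps this entirely by restricting attention to generators that lie on \emph{extremal rays} of $\Sigma$.  For such an $a_i$, any factorization $a_i^2=a_k\cdot b$ inside a common cone forces $a_k$ onto the one-dimensional face spanned by $a_i$, so $a_k$ is a positive multiple of $a_i$, contradicting minimality of the generating set.  Hence $(a_1,\ldots,a_{i-1}):a_i=(a_1,\ldots,a_{i-1})$ for every extremal-ray generator $a_i$---no case split, no iteration on a moving index.  A short descent on the product of the extremal-ray generators $a_{i_1}\cdots a_{i_t}$ then shows they all lie in a single cone, which is enough to conclude that $\Sigma$ is the face poset of that cone.  The moral is that the extremal-ray hypothesis supplies exactly the rigidity your general inductive step lacks: it rules out the troublesome factorizations $2a_i=a_k+a_l$ with $l>i$ from the start.
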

In the proof of the theorem, we need the following lemma.
\begin{lem}
\label{i-Koszul}
Under the assumptions of Theorem \ref{i-K}, if $a_i$ generates an extremal ray of $\Sigma$ then $(a_1,\ldots,a_{i-1}):a_i= (a_1,\ldots,a_{i-1}).$
\end{lem}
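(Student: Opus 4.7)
The plan is to use the i-Koszul hypothesis to pin the colon ideal down to one of the form $(a_1,\ldots,a_j)$, and then rule out $j \ge i$ by a $\Z^d$-multidegree argument combined with the face structure of the extremal ray.

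First I would invoke the i-Koszul filtration property: since $\Fc=\{(a_1,\ldots,a_j):j=0,\ldots,n\}$ is a Koszul filtration, $(a_1,\ldots,a_{i-1}):a_i=(a_1,\ldots,a_j)$ for some $j$. The trivial inclusion $(a_1,\ldots,a_{i-1})\subseteq(a_1,\ldots,a_{i-1}):a_i$ forces $j\ge i-1$, so the lemma reduces to excluding $j\ge i$, equivalently to showing $a_i^2\notin(a_1,\ldots,a_{i-1})$.

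Next I would argue by contradiction, exploiting the $\Z^d$-grading. Every $\Z^d$-graded component of $k[\Mcc]$ is at most one-dimensional, so $a_i^2\in(a_1,\ldots,a_{i-1})$ would give $t^{2a_i}=\lambda\, t^{a_k}t^c$ in $k[\Mcc]$ for some $k<i$, some $c\in|\Mcc|$, and some nonzero $\lambda\in k$, with $a_k,c$ in a common cone $C\in\Sigma$ (so that the product is nonzero) and $a_k+c=2a_i$ in $\Z^d$. Now comes the geometric step, which I expect to be the main obstacle: $2a_i$ lies on the extremal ray $D=\R_+a_i$, and $2a_i=a_k+c\in M_C$ forces $D\subseteq C$; since $D,C\in\Sigma$, the fan axioms then make $D$ a face of $C$. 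Using the standard property of faces of cones, the fact that $a_k+c\in D$ with $a_k,c\in C$ forces $a_k,c\in D$. Combined with the compatibility $M_D=M_C\cap D$, we obtain $a_k\in M_D$.

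Finally, the homogeneity of the system of generators closes the argument: $k[M_D]$ is minimally generated in degree one by $\{a_1,\ldots,a_n\}\cap M_D$, and since $D$ is one-dimensional this forces $M_D=\N a_i$ and $\{a_1,\ldots,a_n\}\cap M_D=\{a_i\}$ (otherwise some other $a_j$ on $D$ would be $m a_i$ with $m\ge 2$, so $t^{a_j}=(t^{a_i})^m$ would have degree $m\ne1$, contradicting its being a minimal degree-one generator). Therefore $a_k=a_i$, which contradicts $k<i$, and the proof is complete.
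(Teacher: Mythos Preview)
Your proof is correct and follows the same strategy as the paper: invoke i-Koszulness to get $(a_1,\ldots,a_{i-1}):a_i=(a_1,\ldots,a_j)$, rule out $j\ge i$ by writing $a_i^2=ba_l$ with $l<i$, and use the face property of the extremal ray $D=\R_+a_i$ to force $a_l\in D$, contradicting minimality of the homogeneous generating system. You spell out the fan-axiom step (that $D$ is a face of $C$) and the final contradiction in more detail than the paper does, but the argument is otherwise identical.
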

\begin{proof} 
From the i-Koszulness assumption $(a_1,\ldots,a_{i-1}):a_i= (a_1,\ldots,a_j).$ If $j\ge i$, then
 $a_i \in (a_1,\ldots,a_{i-1}):a_i$. Thus $a_i^2=ba_l$ for some $l<i, b\in \cup _{C\in \Sigma}M_C$. Since $a_i^2\neq 0$, we conclude that 
$a_i, b, a_l$ belong to a common cone $C$ of $\Sigma$. Since $a_i$ generates an extremal ray of $C$, we must have $a_l$ belongs to that extremal ray. In other words, $a_l$ is a non-zero multiple of $a_i$. This contradicts with the minimality of $\{a_1,\ldots,a_n\}$.
\end{proof}
\begin{proof}[Proof of Theorem \ref{i-K}]
We only need to prove that all the extremal rays of $\Sigma$ belong to the same face of $\Sigma$. Assume that this is not the case. 
If $a_{i_1},\ldots,a_{i_t}$ generate the extremal rays of $\Sigma$ with $1\le i_1<\cdots<i_t \le n$ then $a_{i_1}a_{i_2}\cdots a_{i_t}=0$. 

Of course $a_{i_2}\cdots a_{i_t}\in (a_1,\ldots,a_{i_1-1}):a_{i_1}$. From Lemma \ref{i-Koszul},
we have that $a_{i_2}\cdots a_{i_t}\in (a_1,\ldots,a_{i_1-1})$. 
This in turn implies $a_{i_3}\cdots a_{i_t}\in (a_1,\ldots,a_{i_2-1}):a_{i_2}$. So again from Lemma \ref{i-Koszul}, we get
$a_{i_3}\cdots a_{i_t}\in (a_1,\ldots,a_{i_2-1})$. Iterate this argument and in the end we will have $a_{i_t} \in (a_1,\ldots,a_{i_{t-1}-1})$. 
This is a contradiction and hence we are done.
\end{proof}
\begin{cor}
If $k[\mathcal{M}]$ is a homogeneous u-i-Koszul toric face ring then
$k[\mathcal{M}]$ is a polynomial ring.
\end{cor}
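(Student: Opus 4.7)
\textbf{My plan is to first invoke Theorem~\ref{i-K}} and then reduce to a statement about affine monoid rings. Since u-i-Koszulness of $k[\Mcc]$ implies i-Koszulness with respect to the homogeneous system of generators $a_1,\ldots,a_n$, Theorem~\ref{i-K} tells us that $\Sigma$ is the face poset of a single cone, so $k[\Mcc] = k[M]$ is an affine monoid ring. It remains to show that any u-i-Koszul affine monoid ring is a polynomial ring, equivalently, the toric ideal $I_M \subseteq k[X_1,\ldots,X_n]$ vanishes.

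\textbf{To establish the latter by contradiction}, I would pick a binomial $X^u - X^v \in I_M$ with $\operatorname{supp}(u) \cap \operatorname{supp}(v) = \emptyset$ and total degree $p = |u| = |v|$ minimal; necessarily $p \ge 2$. After possibly swapping $u$ and $v$, I may assume $|\operatorname{supp}(u)| \ge 2$: the opposite extreme $u = p e_s$, $v = p e_j$ with $s \ne j$ would force $\bar a_s = \bar a_j$ by torsion-freeness of $M \subseteq \Z^d$, contradicting distinctness of the minimal generators. I then choose $s \in \operatorname{supp}(u)$, write $\{j_1,\ldots,j_m\} = \operatorname{supp}(v)$, and order the generators as $a_{j_1},\ldots,a_{j_m},a_s,\ldots$. u-i-Koszulness forces the colon ideal $J := (a_{j_1},\ldots,a_{j_m}) : a_s$ to be a prefix of this ordering, and the identity $a_s \cdot X^{u-e_s} = X^u = X^v \in (a_{j_1},\ldots,a_{j_m})$ places $B := X^{u-e_s}$ into $J$.

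\textbf{The heart of the argument} will be two claims established via the minimality of $p$: (i) $B \notin (a_{j_1},\ldots,a_{j_m})$ in $R$, and (ii) $a_s^2 \notin (a_{j_1},\ldots,a_{j_m})$ in $R$. For each, I would translate the putative containment into a monoid identity; after cancelling common summands and using $s \notin \operatorname{supp}(v)$, I can extract a nontrivial binomial in $I_M$ of degree at most $p-1$ for (i) and of degree exactly $2$ for (ii), contradicting the minimality of $p$ whenever $p \ge 3$. Together, (i) and (ii) place $J$ strictly between $(a_{j_1},\ldots,a_{j_m})$ and $(a_{j_1},\ldots,a_{j_m},a_s)$, so $J$ cannot be a prefix of the chosen ordering, contradicting u-i-Koszulness.

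\textbf{The hard part will be the edge case} $p = 2$ of (ii), where the auxiliary degree-$2$ binomial is not shorter than the chosen minimal one. The hypothesis $|\operatorname{supp}(u)| \ge 2$ guarantees $u \ne 2e_s$, so the auxiliary relation $X_s^2 - X_{j_r}X_k$ is a genuinely different minimal binomial in $I_M$. A second pass of the entire analysis on this new relation---swapping sides once more to keep $|\operatorname{supp}(u)| \ge 2$, and invoking torsion-freeness of $\Z^d$ again to rule out the fully degenerate configuration where all supports collapse onto a single index---then produces the final contradiction, forcing $I_M = 0$ and hence the corollary.
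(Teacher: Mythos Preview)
Your reduction via Theorem~\ref{i-K} to the affine monoid case is exactly what the paper does. From that point, however, the paper simply quotes \cite[Prop.~5.5]{Bl}, which already asserts that a u-i-Koszul affine monoid ring is a polynomial ring; it does not reprove this fact. Your proposal instead tries to establish Blum's result from scratch, so the second half of your argument is genuinely different from the paper.

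For $p\ge 3$ your scheme is sound: claim~(i) follows because any hypothetical identity $X^{u-e_s}=a_{j_r}X^{w}$ in $R$ yields, after cancelling the monomial $\gcd$ (legitimate since $I_M$ is prime and contains no monomial), a nonzero binomial of degree $\le p-1$ with disjoint supports, and claim~(ii) similarly produces a degree-$2$ binomial with disjoint supports. Together they force the colon $J$ to violate the prefix form, as you say.

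The gap is precisely where you flag the ``hard part'': the case $p=2$ of~(ii). Your proposed fix --- run a \emph{second} pass on the auxiliary relation $X_s^2-X_{j_r}X_k$ --- does not terminate. Carrying it out with $u'=e_{j_r}+e_k$ and $v'=2e_s$ yields, by the identical reasoning, $a_{j_r}^2\in(a_s)$, hence a third binomial $X_{j_r}^2-X_sX_{\ell}$; a further pass gives $X_{\ell}^2-X_{j_r}X_{\ell'}$, and so on. In the concrete situation $v=2e_{j_1}$ one checks that the successive generators produced are $a^{(n)}=na_s-(n-1)a_{j_1}$, each distinct from the previous ones by torsion-freeness, so no fixed finite number of passes ever reaches a contradiction. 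What is actually needed is an additional argument that this iteration manufactures infinitely many pairwise distinct minimal generators of $M$, contradicting finite generation --- but this is not the ``second pass'' you describe, and your sketch gives no indication of such a finiteness step. As written, the $p=2$ case is incomplete; either supply the infinite-descent argument explicitly, or follow the paper and invoke \cite[Prop.~5.5]{Bl}.
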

\begin{proof} From Theorem \ref{i-K}, we get that $k[\mathcal{M}]$ is an affine monoid ring.
The Corollary follows from \cite[Prop.~5.5]{Bl}, which says that affine monoid rings which are u-i-Koszul must be
polynomial rings.
\end{proof}
\section*{Acknowledgment}
We are grateful to Tim R\"omer for generously suggesting problems and many insightful ideas on the subject of this paper. I am indebted to him 
for allowing me to include Conjecture \ref{main_conj} in the paper. We want to express our sincere thank to Ngo Viet Trung and Aldo Conca for 
their encouragement and inspiring comments. 

We thank the anonymous referee for his/her thoughtful comments which considerably helped us to improve the presentation of this paper.



\begin{thebibliography}{99}
\bibitem{BFr}
J.\ Backelin and R.\ Fr\"oberg, {\em Koszul algebras, Veronese subrings and rings with linear resolution}. Rev. Roumaine Math. Pures Appl. {\bf 30} (1985),
85--97.
\bibitem{Bl}
S.\ Blum, {\em Initially Koszul algebras}. Beitr\"age Algebra Geom. {\bf 41} (2000), 455--467.
\bibitem{BC}
W.\ Bruns and A.\ Conca, {\em Gr\"obner basis and determinantal ideals}. In {\em Commutative algebra, singularities and computer algebra}, J. Herzog and
V. Vuletescu, Eds., NATO Sci. Ser. II, Math. Phys. Chem. {\bf 115}, Kluwer, (2003), 9--66.
\bibitem{BR}
M.\ Brun and T.\ R\"omer, {\em Subdivision of toric complexes}. J. Algebraic Combin. {\bf 21} (2005), 423--448.
\bibitem{BG}
W.\ Bruns and J.\ Gubeladze, {\em Polytopes, rings and K-theory}. Springer Monographs in Mathematics, Springer (2009).
\bibitem{BGT}
W.\ Bruns, J.\ Gubeladze and N.\ V.\ Trung, {\em Normal polytopes, triangulations, and Koszul algebras}. J. Reine Angew. Math. {\bf 485}
 (1997), 123--160.
\bibitem{BH}
W.\ Bruns and J.\ Herzog, {\em Cohen-Macaulay rings. Rev. ed.} Cambridge Studies in Advanced Mathematics {\bf 39}, Cambridge 
University Press (1998).
\bibitem{BKR}
W.\ Bruns, R.\ Koch and T.\ R\"omer, {\em Gr\"obner bases and Betti numbers of monoidal complexes}. Michigan Math. J. {\bf 57} 
(2008), 71--97.
\bibitem{CRV}
A.\ Conca, M.\ Rossi and G.\ Valla, {\em Gr\"obner flags and Gorenstein algebras}. Compositio Math. {\bf 129} (2001), 95--121.
\bibitem{CTV}
A.\ Conca, N.\ V.\ Trung and G.\ Valla, {\em Koszul property for points in projective space}. Math. Scand. {\bf 89} (2001), 201--216.
\bibitem{EH}
N.\ Epstein and D.\ H.\ Nguyen, 
{\em Algebra retracts and Stanley-Reisner rings}. Preprint (2012).
\bibitem{Fr1}
R.\ Fr\"oberg, {\em Determination of a class of Poincar\'e series}. Math. Scand. {\bf 37} (1975), 29--39.
\bibitem{Fr2}
R.\ Fr\"oberg, {\em Koszul algebras}. In: {\em Advances in commutative ring theory} (Fez, 1997), 337--350, Lecture Notes Pure Appl. Math. {\bf 205}, Dekker,
 New York (1999).
\bibitem{GS}
D.\ Grayson and M.\ Stillman, {\em Macaulay2. A software system for research in algebraic geometry}. \newblock \verb|http://www.math.uiuc.edu/Macaulay2/|
\bibitem{HHO}
J.\ Herzog, T.\ Hibi and H.\ Ohsugi, {\em Combinatorial pure subrings}. Osaka J. Math. {\bf 37} (2000), 745--757.
\bibitem{HHR}
J.\ Herzog, T.\ Hibi and G.\ Restuccia, {\em Strongly Koszul algebras}. Math. Scand. {\bf 86} (2000), 161--178.
\bibitem{HRW}
J.\ Herzog, V.\ Reiner and V.\ Welker, {\em The Koszul property in affine semigroup rings}. Pacific J. Math. {\bf 186} (1998), 39--65.
\bibitem{IR}
B.\ Ichim and T.\ R\"omer, {\em On toric face rings}. J. Pure Appl. Algebra {\bf 210} (2007), 249--266.
\bibitem{LS} 
O.\ A.\ Laudal and A.\ Sletsj\o{}e, {\em Betti numbers of monoid algebras. Applications to 2-dimensional torus embeddings}. Math. 
Scand. {\bf 56} (1985), 145--162.
\bibitem{Ma}
S.\ Maclane, {\em Homology}. Springer (1975).
\bibitem{Nguyen2}
D.\ H.\ Nguyen, {\em Homological and combinatorial properties of toric face rings}. 
PhD dissertation, Osnabr\"uck (2012). available at\\ \newblock \verb|http://repositorium.uni-osnabrueck.de/handle/urn:nbn:de:gbv:700-2012082110274|
\bibitem{OkaYan}
R.\ Okazaki and K.\ Yanagawa, {\em Dualizing complex of a toric face ring}. Nagoya Math. J. {\bf 196} (2009), 87--116.
\bibitem{PRS}
I.\ Peeva, V.\ Reiner and B.\ Sturmfels, {\em How to shell a monoid}. Math. Ann. {\bf 310} (1998), 373--393.
\bibitem{Sta1}
R.\ P.\ Stanley, {\em Generalized h-vectors, intersection cohomology of toric varieties, and related results}. In: {\em Commutative Algebra and
Combinatorics}, in: Adv. Stud. Pure Math. {\bf 11} (1987), 187--213.
\bibitem{Sta2}
R.\ P.\ Stanley, {\em Combinatorics and commutative algebra. 2nd ed.} Progress in Mathematics {\bf 41}, Birkh\"auser (2005).
\end{thebibliography}
\end{document}